\documentclass[12pt]{amsart}
\usepackage[utf8]{inputenc}
\usepackage{amsmath,amsfonts}
\usepackage{amsthm}
\usepackage{amssymb}
\usepackage{hyperref}
\usepackage[shortlabels]{enumitem}
\usepackage{bm}
\usepackage{multirow}
\usepackage{tikz-cd}
\newtheorem{thm}{Theorem}[section]
\newtheorem{lem}[thm]{Lemma}
\newtheorem{prop}[thm]{Proposition}
\newtheorem{cor}[thm]{Corollary}
\newtheorem*{thm*}{Theorem}

\theoremstyle{definition}
\newtheorem{dfn}[thm]{Definition}
\newtheorem*{dfn*}{Definition}

\newtheorem{rem}[thm]{Remark}
\newtheorem{ques}{Question}
\newtheorem*{ques*}{Question}
\newcommand{\ra}{\rightarrow}

\newcommand{\N}{\mathbb{N}}

\newcommand{\C}{\mathbb{C}}
\newcommand{\Z}{\mathbb{Z}}

\title{Embeddability between $3$-manifolds is not stable}
\author{Giulio Belletti}
\address{IRMP, UC Louvain, Chemin du Cyclotron 2, bte L7.01.02, 1348 Louvain-La-Neuve, Belgium}

\email{gbelletti451@gmail.com}

\author{Renaud Detcherry}
\address{Institut de Mathématiques de Bourgogne \& Institut Universitaire de France, UMR 5584 CNRS, Université Bourgogne Franche-Comté, F-2100 Dijon, France}
\email{renaud.detcherry@u-bourgogne.fr}

\date{} 

\begin{document}
	
	\maketitle
	
	\begin{abstract} We prove that given two compact oriented $3$-manifolds $N$ and $M,$ with $M$ satisfying only a mild hypothesis, there is a hyperbolic $3$-manifold $N'$ arbitrarily ``closely related'' to $N,$ and such that $N'$ does not embed in $M.$ For instance, as a weak version of our main theorem, if $M$ is a rational homology sphere then for any $k\geq 1$ the $3$-manifold $N'$ can be chosen to be $Y_k$-equivalent to $N.$  Our techniques rely on the construction of $3$-manifolds with complicated Frohman--Kania-Bartoszy\'nska ideals, using strong approximation for $\mathrm{SO}_3$-Witten-Reshetikhin-Turaev quantum representations of mapping class groups of surfaces.
	\end{abstract}
	
\section{Introduction}
Broadly speaking, the goal of this article is to study the following question:
\begin{ques}\label{ques:mainProblem} ($3$-manifold embedding problem)
	If $N$ and $M$ are two compact oriented $3$-manifolds (possibly with boundary), how can one determine whether $N$ can be embedded inside $M?$
\end{ques}

Note that in the above, we make no assumption on the boundaries of $N$ and $M,$ which can have arbitrarily many connected components of arbitrary genera. 
We will restrict ourselves to connected manifolds; while embedding disconnected manifolds into each other presents some subtleties and cannot always be reduced straightforwardly to the connected case, to obstruct embeddings it suffices to do so for a single connected component. 

While Question \ref{ques:mainProblem} has rarely been stated in this general way in the literature, there are many techniques to approach this problem, coming from every corner of $3$-dimensional topology: among others, one can get obstructions from homology, as well as the hyperbolic (or simplicial) volume or prime and JSJ decompositions. On the other hand, algorithmic techniques arising for instance from normal surface theory can give partial answers to this question. We will give a comprehensive overview of classical techniques that have been applied to the $3$-manifold embedding problem in Section \ref{sec:overview}.

Because $3$-manifold topology as well as embeddings between manifolds of the same dimension are very rigid, it is expected that embeddings between $3$-manifolds are rare.

Our main results are a confirmation of this heuristic. We show that for any $N$ and $M$ (with some mild hypothesis on $M$) there exist $3$-manifolds $N'$
that are \textit{arbitrarily close} to $N$ but do not embed in $M.$ To make sense of the notion of being arbitrarily close, we use the so-called $Y_k$-equivalence relations on $3$-manifolds, as well as the profinite topology on profinite completions of fundamental groups.

For $F$ a compact oriented surface, we will denote by $I(F)$ its Torelli group, that is, the kernel of the natural map $\mathrm{Mod}(F)\longrightarrow \mathrm{Aut}(H_1(F;\Z)),$ where $\mathrm{Mod}(F)$ is the mapping class group of $F.$ For $G$ a group, we denote by $\Gamma_k G$ the $k$-th element of its lower central series, defined by $\Gamma_1 G=G$ and $\Gamma_{k+1}G=[\Gamma_kG,G].$

\begin{dfn}
	\label{def:Y_kequiv} For $k\geq 1,$ we say that two $3$-manifolds $N$ and $N'$ are $Y_k$-equivalent, and we write $N\underset{Y_k}{\sim}N'$ if one can obtain $N'$ from $N$ by cutting $N$ along an embedded compact oriented surface $F$ with at most one boundary component, and regluing using a gluing map in $\Gamma_k I(F).$
\end{dfn}
It is easy to see that two $Y_1$-equivalent manifolds have the same homology. The $Y_k$-equivalences (and the related $J_k$-equivalences) have been studied in depth in the literature; we refer to \cite{Mas:notes} for an overview of the subject. It is conjectured that if two $3$-manifolds are $Y_k$-equivalent for every $k,$ then they are homeomorphic (see \cite[Goussarov-Habiro Conjecture]{Mas:notes} and subsequent discussion); hence $Y_k$-equivalence gives us a good notion of $3$-manifolds being close.

For $G$ a group, we denote by $\widehat{G}$ its profinite completion, that is the profinite group defined as the inverse limit
$$\widehat{G}=\underset{N\triangleleft G, [G:N]<\infty}{\varprojlim} G/N.$$ Two finitely generated groups have isomorphic profinite completions if and only if they have the same set of finite quotients; in light of this, we will say that a sequence of groups $G_k$ \emph{profinitely converges} to $G$ if there is a sequence of natural numbers $d_k\ra \infty$ such that $G$ and $G_k$ have the same finite quotients up to order $d_k.$

It is generally believed that hyperbolic $3$-manifolds are \emph{profinitely rigid}, meaning that if $\widehat{\pi_1(M')}$ is isomorphic to $\widehat{\pi_1(M)}$ where $M$ is hyperbolic, then $M\simeq M'.$ It is known that closed hyperbolic $3$-manifolds \cite{Liu23}, hyperbolic $3$-manifolds with toral boundary \cite{Xu25} and closed Seifert fibered or graph manifolds \cite{Wil1,Wil2,Wil3} are \emph{profinitely almost rigid}, meaning that there are only a finite number of homeomorphism classes of $M'$ such that $\widehat{\pi_1(M')}\simeq \widehat{\pi_1(M)}.$

In this sense, having fundamental groups arbitrarily profinitely close to $\pi_1(N)$ gives another good notion of being close to $N.$

We are now able to state our main theorem:

\begin{thm}
	\label{thm:intro1} Let $N$ be a compact oriented $3$-manifold, and $M$ be a very good compact oriented $3$-manifold. Then, there exists a sequence $(N_k)_{k\geq 1}$ of hyperbolic compact oriented $3$-manifolds, with $N_k \underset{Y_k}{\sim} N$ and whose fundamental groups $\pi_1(N_k)$ profinitely converge to $\pi_1(N),$ and such that none of the $N_k$ embeds in $M.$ Furthermore, these manifolds can be chosen so that $\textrm{Vol}(N_k)$ diverges, and $N_k$ contains no essential, non-boundary parallel surface of genus less than $k$.
\end{thm}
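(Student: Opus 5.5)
We will obtain each $N_k$ from $N$ by cutting along a fixed embedded surface and regluing by a map lying deep in the Torelli group. The obstruction to embedding will come from Frohman--Kania-Bartoszy\'nska (FKB) ideals, as the abstract indicates.

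\textbf{Setup.} Fix an embedded compact oriented surface $F\subset N$ of large genus $g$ with one boundary component. For instance, take a boundary-connected sum of a Heegaard-type surface inside a ball in $N$ with the boundary of a regular neighborhood of a graph. Choose $F$ so that cutting and regluing by a pseudo-Anosov-like map produces hyperbolic manifolds.

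\textbf{Step 1: the FKB obstruction.} For a prime $p$ (odd level), the $\mathrm{SO}_3$-WRT TQFT assigns to a 3-manifold $N'$ with boundary an ideal $I_p(N')$ in the ring of cyclotomic integers. This ideal is generated by the invariants $\langle N'\cup_\partial X\rangle_p$ over all ways of closing up $N'$. If $N'$ embeds in $M$, then $M=N'\cup C$, so $\langle M\rangle_p$, and more generally the invariants of closings of $M$, lie in $I_p(N')$. The ``very good'' hypothesis on $M$ is presumably what guarantees that some invariant of $M$ (or a uniform family of them) is nonzero, or not too divisible, for infinitely many $p$. Thus it suffices to construct $N_k$ with $I_p(N_k)$ so small, e.g.\ contained in a high power of a prime ideal above $p$, that it misses $\langle M\rangle_p$.

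\textbf{Step 2: strong approximation.} The quantum representations $\rho_p$ restricted to $\Gamma_k I(F)$ have Zariski-dense image. By strong approximation, their reductions modulo primes surject onto finite groups of Lie type, for infinitely many $p$. Hence we can choose $\phi_k\in\Gamma_kI(F)$ whose image under $\rho_p$, reduced modulo a suitable prime, is any prescribed element. We pick the element so that the vector $\rho_p(\phi_k)\cdot v_{N\setminus F\text{-side}}$ lands in the span of vectors pairing into a small ideal with every closing. This forces $I_p(N_k)\not\ni\langle M\rangle_p$, with $N_k=N_{\phi_k}$. The key point is that finitely many closings suffice, via the finite-dimensionality of the TQFT space.

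\textbf{Step 3: the remaining properties.} We use the freedom in $\phi_k$, which need only be prescribed modulo a finite-index congruence subgroup, to modify it by a further element of $\Gamma_kI(F)$ lying in the kernel of that congruence reduction. This lets us achieve the following.
\begin{enumerate}[(a)]
\item Hyperbolicity, large volume and no small-genus essential surfaces: compose with a high power of a suitable pseudo-Anosov in $\Gamma_kI(F)$ ($\Gamma_k I(F)$ contains pseudo-Anosovs), so that the glued manifold has a long product region in the sense of Lackenby / Bachman--Schleimer. Essential surfaces of genus $<k$ are then ruled out, and volume diverges because hyperbolic Dehn filling/gluing estimates give volume growing with the power.
\item Profinite convergence: $\phi_k$ acts trivially on $\pi_1(F)/K_d$ for the characteristic subgroup $K_d$ that is the intersection of all subgroups of index $\le d_k$. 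This holds if we additionally take $\phi_k$ in the kernel of $\mathrm{Mod}(F)\to\mathrm{Aut}(\pi_1F/K_d)$, a finite-index subgroup. Then van Kampen shows $\pi_1(N_k)$ and $\pi_1(N)$ have the same quotients of order $\le d_k$.
\end{enumerate}
Intersecting all these finite-index and congruence conditions with $\Gamma_kI(F)$ still leaves a dense subgroup for strong approximation, so all requirements are compatible.

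\textbf{Main obstacle.} The main obstacle is Step 1/2: proving that the FKB ideal of $N_k$ can be made to exclude the invariant of $M$. This requires strong approximation for the image of a deep lower central series term of the Torelli group, not just the full mapping class group. It also requires a careful integrality argument showing that prescribing $\rho_p(\phi_k)$ modulo a prime controls every closing simultaneously. For that, I would first try working in the integral lattice of Gilmer--Masbaum and choosing the image to kill a lattice vector modulo a large power of $(1-\zeta_p)$.
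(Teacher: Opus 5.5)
The overall architecture of your proposal matches the paper's. You twist along a surface by an element of $\Gamma_kI$ intersected with a finite-index normal subgroup, chosen via strong approximation so that $I_p$ of the result lands in an ideal avoiding $I_p(M)$. You then compose with a power of a pseudo-Anosov in the relevant congruence kernel to get hyperbolicity, large volume and no small essential surfaces. The paper uses $T_{np}$ for that last step, but your $\ker\rho_{p,q}$ variant also works. Your van Kampen argument for profinite closeness is a valid and more elementary substitute for the paper's Dijkgraaf--Witten/Funar route.

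\emph{The gap.} It sits at the step you flag as the crux, and the route you propose there fails. Masbaum--Reid strong approximation holds for a \emph{fixed} level $p$ and infinitely many residue characteristics $q\neq p$. It says nothing modulo powers of the prime $h=(1-u_p)$ above $p$, where $u_p=-\zeta_p$; note that $1-u_p$ is even a unit in the paper's ring $\Z[p^{-1},\zeta_p]$. Worse, Torelli twists cannot make $I_p$ $h$-adically small.
\begin{itemize}
\item In the integral normalization, Murakami's congruence (quoted in the paper) gives $RT_p(X)\not\equiv 0 \pmod h$ for a rational homology sphere $X$ with $p\nmid |H_1(X)|$.
\item Suppose $N\subset X$ for such an $X$, e.g.\ $N\subset S^3$, or $N\subset M$ with $M$ such a homology sphere.
\item Performing the same Torelli twist inside $X$ gives a closed manifold $X_k\supset N_k$ with $H_1(X_k)\cong H_1(X)$.
\item Hence $I_p(N_k)$ contains the $h$-adic unit $RT_p(X_k)$, so $I_p(N_k)\not\subseteq (h)$ for every $k$.
\end{itemize}

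\emph{The missing idea.} Take as the ``small ideal'' exactly a maximal ideal $J$ with $\Z[p^{-1},\zeta_p]/J\simeq\mathbb{F}_q$ at which $\rho_{p,q}$ is surjective. Then replace your ``finitely many closings'' step by a dimension count.
\begin{itemize}
\item Let $F=\Sigma$ be a Heegaard surface of genus $\geq 2$, with $N=C\cup H$. A surface with one boundary component, as in your setup, does not exist when $N$ is closed.
\item Then $RT_p(N_\phi)=RT_p(C)\,\rho_p(\phi)\,RT_p(H)\in RT_p(\partial N)$.
\item Since $\dim RT_p(\partial N)<\dim RT_p(\Sigma)$, the kernel of $RT_p(C)$ is nonzero mod $J$, while $RT_p(H)\not\equiv 0 \pmod J$.
\item Transitivity of $\mathrm{PSL}_d(\mathbb{F}_q)$ on lines gives $\phi$ in your subgroup with $RT_p(N_\phi)\equiv 0 \pmod J$. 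This forces $I_p(N_\phi)\subseteq J$ for all closings at once.
\item Since $M$ is $p$-good, $I_p(M)\neq 0$ lies in only finitely many maximal ideals. So among the infinitely many Masbaum--Reid ideals you can pick $J$ with $I_p(M)\nsubseteq J$.
\end{itemize}

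\emph{Surjectivity on your subgroup is formal.} Surjectivity of $\rho_{p,q}$ on $\Gamma_kI$ intersected with your finite-index normal subgroup needs no new strong approximation theorem.
\begin{itemize}
\item The image of $I(\Sigma)$ is nontrivial: a separating twist has distinct eigenvalues mod $J$, because differences of distinct $p$-th roots of unity are units in $\Z[p^{-1},\zeta_p]$.
\item The image of a normal subgroup of $\mathrm{Mod}(\Sigma)$ in the simple group $\mathrm{PSL}_d(\mathbb{F}_q)$ is trivial or everything, so the image of $I(\Sigma)$ is everything.
\item Since $\mathrm{PSL}_d(\mathbb{F}_q)$ is perfect, $\Gamma_k$ of it is the whole group, so $\Gamma_kI$ also surjects.
\item Intersecting with a normal subgroup of index less than $|\mathrm{PSL}_d(\mathbb{F}_q)|$ cannot make the image trivial, so it stays surjective.
\end{itemize}
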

The definition of \emph{very good $3$-manifold} is postponed to Definition \ref{def:VeryGood}; for the moment it suffices to say that any $3$-manifold that embeds in a rational homology sphere is very good, and that we conjecture that all compact oriented $3$-manifolds are very good (see also Proposition \ref{prop:VeryGood}).

Theorem \ref{thm:intro1} encapsulates both notions of a $3$-manifold $N$ being approximable by $3$-manifolds that are not embeddable in $M;$ we will package the two notions into a single, stronger equivalence relation, the $(Y_k,T_n)$-equivalence relation, which we will introduce and study in Section \ref{sec:definitons}. 

In fact, our techniques allow us to give a probabilistic version of Theorem \ref{thm:intro1}.
To state this, we use a variation of the Dunfield-Thurston \cite{DT06} model of random $3$-manifolds, which we describe below.

For $G$ a finite or countable group, $S$ a generating set of $G,$ and $\mu$ a probability measure on $S$ with support $S,$ we can take $x_1,\cdots,x_d$ independent identically distributed random variables with law $\mu$ and define the associated random walk $(g_d)_{d\geq 0}$ by $g_d=x_1\ldots x_d$.

 Now pick a Heegaard splitting $N=C\cup_\phi H$ where $H$ is a handlebody of genus $g\geq 2$, $C$ is a compression body, $\partial_- C\simeq \partial H \simeq F$ and $\phi\in \mathrm{Mod}(F).$ Then we let $N'$ be the random $3$-manifold $C\cup_{\phi\circ f} H,$ where $f\in \Gamma_kI(F)$ is given by a random walk in the group $\Gamma_k I(F)$ for some $k\geq 1.$ 

\begin{thm}
	\label{thm:main2} Let $k\geq 1$ and let $M$ be a very good $3$-manifold. Let also 
	$$N=C\cup_{\phi} H$$ be a Heegaard splitting of a compact oriented $3$-manifold of genus $g\geq 2$. Given the random $3$-manifold $$N'_d=C\cup_{\phi\circ f_d} H$$ where $f_d\in \mathrm{Mod}(F)$ is a random walk of length $d$ in $\Gamma_kI(F),$ we have:
	
	$$\underset{d\rightarrow \infty}{\liminf} \ \mathbb{P}(N'_d \ \textrm{does not embed in} \ M)>0.$$
\end{thm}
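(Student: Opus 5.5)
The proof rests on the Frohman--Kania-Bartoszy\'nska ideal obstruction, which the abstract advertises. I take the working form of this obstruction to be the following. For each odd prime $p$ (or each level $r$), there is an ideal $I_p(N)$ in the ring of integers $\mathcal{O}_p$ of a cyclotomic field. It is generated by the $\mathrm{SO}_3$-WRT invariants of closed manifolds obtained by gluing an arbitrary manifold to $\partial N$. If $N\subset M$, then every such closed manifold built from $M$ is also built from $N$, so $I_p(M)\subset I_p(N)$. Being \emph{very good} should mean that, for infinitely many $p$, the ideal $I_p(M)$ is nonzero, and hence not contained in some proper ideals. It suffices to produce $N'_d$ with $I_p(N'_d)\subsetneq$ any prescribed ideal not containing $I_p(M)$. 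Concretely, I aim to force $I_p(N'_d)$ into a prime $\mathfrak{P}$ (or a power of one) of $\mathcal{O}_p$ that does not contain $I_p(M)$.

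\textbf{Step 1: reduce to a statement about the quantum representation.} Fix a prime $p$ with $I_p(M)\neq 0$ and choose a prime $\mathfrak{P}$ above some rational prime $\ell$, with $I_p(M)\not\subset \mathfrak{P}$. This is possible since $I_p(M)$ is a fixed nonzero ideal with finitely many prime divisors. Write $N'=C\cup_{\phi f}H$ and look at the generators of $I_p(N')$. Any closed manifold containing $N'$ is $X\cup C\cup_{\phi f}H$, and its invariant is a pairing
\[
\langle v_X,\ \rho_p(\phi f)\, v_H\rangle ,
\]
where $v_H$ is the handlebody vector and $v_X$ is a vector coming from $X\cup C$ in the lattice $V_p(F)$ over $\mathcal{O}_p$ (Gilmer--Masbaum integral lattices). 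The ideal $I_p(N')$ is then the ideal generated by the coordinates of $\rho_p(\phi f)v_H$ against the sublattice $L_C$ spanned by all such $v_X$. It therefore suffices that the reduction mod $\mathfrak{P}$ of $\rho_p(\phi f)v_H$ be orthogonal to $L_C$. Equivalently, $\rho_p(f)v_H$ lies mod $\mathfrak{P}$ in a fixed proper subspace $W$ of $V_p(F)\otimes \mathbb{F}_{\mathfrak{P}}$. I would pick $W$ to be a hyperplane so that it is nonempty and proper.

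\textbf{Step 2: strong approximation for $\Gamma_k I(F)$.} The paper's key input, stated later but assumed here, is that the image of $\Gamma_k I(F)$ under $\rho_p$ reduced mod $\mathfrak{P}$ is a large finite group $G_{p,k}$. This holds for suitable $\mathfrak{P}$ and $p$. The reason is that the Zariski closure of $\rho_p(\Gamma_kI(F))$ is the full (special) unitary group, because $\Gamma_kI(F)$ is normal in $\mathrm{Mod}(F)$ and infinite, and strong approximation then gives surjectivity mod almost all primes. Consequently the orbit $G_{p,k}\cdot \bar v_H$ contains a vector in $\phi^{-1}$-translated $W$. This follows because a group containing $\mathrm{SL}$ or $\mathrm{SU}$ over a finite field acts transitively on nonzero vectors of a given norm, and the hyperplane contains vectors of the right norm.

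\textbf{Step 3: random walk equidistribution.} The random walk $f_d$ on $\Gamma_kI(F)$ projects to a random walk on the finite group $G_{p,k}$, with generating support. Its distribution converges to uniform on a coset of some subgroup; I can handle periodicity by passing to $\liminf$ over the cosets. Hence
\[
\liminf_d \mathbb{P}\bigl(\bar\rho_p(\phi f_d)\bar v_H\in W\bigr)\geq c>0,
\]
where $c$ is the proportion of the group sending $\bar v_H$ into the target. On this event $I_p(N'_d)\subset \mathfrak{P}\not\supset I_p(M)$, so $N'_d$ does not embed in $M$.

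\textbf{Main obstacle.} I expect Step 1 to be the delicate part. One must check that the lattice $L_C$ generated by all complements is, mod $\mathfrak{P}$, a proper subspace, so that $W$ is a genuine constraint; equivalently, a nonzero vector orthogonal to $L_C$ must exist in the right orbit. One must also choose $\mathfrak{P}$ compatible both with the very good hypothesis and with the strong approximation primes. If $L_C$ spans everything, I would instead aim for $\rho_p(\phi f)v_H\equiv 0$ mod $\mathfrak{P}$ on a suitable quotient, or work with a higher power $\mathfrak{P}^n$ in place of $\mathfrak{P}$.
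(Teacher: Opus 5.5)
There is a genuine gap. Your architecture is the paper's: FKB ideals, a linear condition mod a prime of $\Z[p^{-1},\zeta_p]$, strong approximation for the normal subgroup $\Gamma_kI(F)$, and equidistribution of the projected walk. But the step you flag as the ``main obstacle'' is exactly the one the argument hinges on, and you leave it open.

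\textbf{The missing step: why the target subspace is nonzero.} First, ``pick $W$ to be a hyperplane'' is not legitimate. The target must be contained in ($\phi$-translated) $L_C^{\perp}$ mod $\mathfrak{P}$, which has codimension up to $d_p(\partial N)$. A hyperplane is admissible only when $L_C$ has rank one, essentially the closed case; landing in any other hyperplane does not put $I_p(N')$ inside $\mathfrak{P}$. What you need is $L_C^{\perp}\neq 0$ mod $\mathfrak{P}$, and the missing idea is a dimension count:
\begin{itemize}
\item Each $v_X$ factors through $C$: $\langle v_X,w\rangle=\langle RT_p(X),RT_p(C)w\rangle$ with $RT_p(X)\in RT_p(\partial N)$.
\item So it suffices to land in $\ker RT_p(C)\bmod J$, whose dimension is at least $d_p(F)-d_p(\partial N)$.
\item This is positive because $\partial N=\partial_-C$ is obtained from $F$ by compressing along a nonempty family of curves, so $\dim RT_p(\partial N)<\dim RT_p(F)$. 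This is where nontriviality of $C$ is used: if $C\cong F\times I$, then $N'_d\cong H$, which embeds in every $M$.
\end{itemize}
Your fallbacks cannot replace this, since $I_p(N')\subseteq\mathfrak{P}^n$ implies $I_p(N')\subseteq\mathfrak{P}$. Your compatibility worry is immediate: Masbaum--Reid give infinitely many admissible maximal ideals $J$, while only finitely many contain the nonzero ideal $I_p(M)$.

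\textbf{Step 2.} ``Normal and infinite'' does not give density. The subgroup $T_p$ is infinite and normal, yet $\rho_p(T_p)$ is trivial because $\rho_p(t_\gamma^p)=1$. The paper argues as in Lemmas \ref{lem:SimplePerfect} and \ref{lem:nonTrivialTorelli}:
\begin{itemize}
\item The image of a normal subgroup is normal in the simple group $\mathrm{PSL}_{d_p(F)}(\mathbb{F}_q)$, for Masbaum--Reid primes $q\equiv 1\pmod p$, hence is trivial or everything.
\item It is nontrivial on $I(F)$: a separating twist has eigenvalues $(-\zeta_p)^{i^2-1}$ whose differences are units, so they stay distinct mod $J$.
\item Perfectness then passes surjectivity to $\Gamma_kI(F)$.
\end{itemize}
Working at these split primes also removes your norm condition, because $\mathrm{PSL}_{d}(\mathbb{F}_q)$ is transitive on $\mathbb{P}(\mathbb{F}_q^{d})$. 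In your $\mathrm{SU}$ variant you would still owe a proof that $\ker RT_p(C)\bmod\mathfrak{P}$ contains vectors of norm $\langle v_H,v_H\rangle$.

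\textbf{Step 3.} Uniformity on a coset of a proper subgroup does not by itself give $c>0$, since the proportion within that coset could vanish. The paper gets aperiodicity from the simplicity of $\mathrm{PSL}_{d}(\mathbb{F}_q)$, which has no nontrivial cyclic quotient. Uniformity on projective space then gives the explicit bound $\frac{q^{d_p(F)-d_p(\partial N)}-1}{q^{d_p(F)}-1}$.
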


In fact, in Theorem \ref{thm:proba}, we give a version of this theorem that is both quantitative and more general. We note that our lower bound for the liminf of the probability does not depend on the choice of $S$ and $\mu,$ nor on $k.$

The main tool that will lead to Theorems \ref{thm:intro1} and \ref{thm:main2} are the Frohman--Kania-Bartoszy\'nska ideals. For $M$ a compact oriented $3$-manifold, the FKB ideals $I_p(M)$ are the ideals of localized cyclotomic rings $\Z[p^{-1},\zeta_p]$ generated by $RT_p(N)$ for all closed manifolds $N$ that contain $M$, where $RT_p(N)$ is the Witten-Reshetikhin-Turaev invariant of $N$ at a primitive $2p$-th root of unity (see Section \ref{sec:FKBideals} for more details). They were defined in \cite{FKB} to provide quantum obstructions to embedding between $3$-manifolds. 

In order to provide $3$-manifolds with suitable $I_p(M)$ ideals, we will make use of the \textit{strong approximation theorem} for quantum representations, which was proved by Masbaum and Reid \cite{MR12}. This theorem says that the quantum representations associated to the $\mathrm{SO}_3$-Witten-Reshetikhin-Turaev TQFT at roots of unity of prime order surject onto $\mathrm{PSL}_d(\mathbb{F}_q)$ for infinitely many $q$'s.

The paper is structured as follows. In Section \ref{sec:overview}, we go over what can be achieved with classical techniques such as homology and simplicial volume. In Section \ref{sec:definitons} we introduce the background tools needed for the statement and proofs of the main results. In Section \ref{sec:equivRelProp} we prove some properties of $Y_k$-equivalence and the related equivalence relations that will appear in the statements. In Section \ref{sec:strongApprox} we extend the strong approximation properties to subgroups of the mapping class group.In Section \ref{sec:proofs} we put all the ingredients together to prove Theorem \ref{thm:main}, which is a stronger version of Theorem \ref{thm:intro1}. Finally, in Section \ref{sec:proba}, we prove a quantitative and stronger version of Theorem \ref{thm:main2}.

\textbf{Acknowledgements:} We wish to thank Gregor Masbaum for helpful discussions. We also wish to thank Gabriele Viaggi for his help with the arguments of Proposition \ref{prop:hyperbolic}. The first named author was supported by FNRS (grant 1.B.044.25F). The IMB, host institution of the second named author, receives support from the EIPHI Graduate School (contract ANR-17-EURE-0002).


\section{Overview of classical techniques for the $3$-manifold embedding problem}
\label{sec:overview}

In this section, we describe a few obstructions to the embedding problem that can be stated in terms of classical or hyperbolic invariants, and describe briefly two algorithms that give partial answers to Question \ref{ques:mainProblem}.

The main purpose of this section is to illustrate what can be said about Question \ref{ques:mainProblem} from classical methods, as well as showcase some of their limitations.
Therefore, we will not try to achieve the most general statements, nor do we claim originality on any of these results. 

Not surprisingly, a first obstruction for $N$ to embed in $M$ may be obtained from homology. Indeed, from Mayer--Vietoris and the ``half lives, half dies'' principle, it is not difficult to prove:

\begin{lem}\label{lem:homolBound}\cite[Thm 1.3]{Ton11}
	If $N$ and $M$ are compact oriented $3$-manifolds and $N$ embeds in $M$ then for any field $\mathbb{F},$ one has
	$$b_1(N;\mathbb{F})-\frac{1}{2}b_1(\partial N;\mathbb{F})\geq b_1(M;\mathbb{F})-\frac{1}{2}b_1(\partial M;\mathbb{F}),$$
	and the inequality is sharp.
\end{lem}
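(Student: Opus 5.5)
Suppose $N\subset M$ and set $W=\overline{M\setminus N}$, a compact oriented $3$-manifold. Throughout, all homology is with coefficients in $\mathbb{F}$ and $b_i$ denotes the $\mathbb{F}$-Betti numbers. Then $M=N\cup_S W$, where $S=N\cap W$ is a closed surface, possibly disconnected. Moreover $\partial N=S\sqcup \partial_0 N$ with $\partial_0 N=\partial N\cap\partial M$, and $\partial W=S\sqcup\partial_0 W$ with $\partial M=\partial_0N\sqcup\partial_0W$.

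The key input is the ``half lives, half dies'' principle. For any compact oriented $3$-manifold $X$, the image of $H_1(\partial X)\to H_1(X)$ has dimension $\frac12 b_1(\partial X)$. I will use it in two places.

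\textbf{Step 1.} Apply the principle to $W$. The image of $H_1(\partial W)\to H_1(W)$ has dimension $\frac12 b_1(\partial W)=\frac12(b_1(S)+b_1(\partial_0W))$. Since $H_1(\partial_0W)$ contributes at most $b_1(\partial_0W)$ dimensions to this image, the image of $H_1(S)\to H_1(W)$ has dimension at least $\frac12 b_1(S)-\frac12 b_1(\partial_0 W)$.

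\textbf{Step 2.} Use the Mayer--Vietoris sequence for $M=N\cup_S W$:
$$H_1(S)\xrightarrow{\ \alpha\ } H_1(N)\oplus H_1(W)\to H_1(M)\to \tilde H_0(S)\to \cdots$$
This gives
$$b_1(M)\le b_1(N)+b_1(W)-\operatorname{rk}\alpha + (\text{correction from } H_0).$$
The correction term is bounded by $\beta_0(S)$, the number of components of $S$, minus the corrections coming from $H_0(N)$ and $H_0(W)$. To handle it, I would first reduce to the case where $S$ is connected, or else absorb the $H_0$ terms by a dual argument on $H_2$. A convenient alternative is to estimate using the long exact sequence of the pair $(M,N)$ together with excision, $H_*(M,N)\cong H_*(W,S)$. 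This gives
$$b_1(M)\le b_1(N)+\dim H_1(W,S).$$
By Lefschetz duality, $H_1(W,S)\cong H^2(W,\partial_0W)$.

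\textbf{Step 3.} Bound $\dim H_1(W,S)$ using the exact sequence of the pair: $\dim H_1(W,S)=\dim\operatorname{coker}(H_1(S)\to H_1(W))+\dim\ker(\tilde H_0(S)\to \tilde H_0(W))$. Since $\dim H_1(W,S)=\dim H^2(W,\partial_0W)=\dim H_1(W,\partial_0W)$, I can swap roles via duality to choose the more convenient form. The target inequality is equivalent to
$$\dim H_1(W,S)\le \tfrac12\big(b_1(\partial M)-b_1(\partial N)\big)+ (b_1(N)-b_1(N))\,\text{-type terms},$$
that is, $b_1(M)-b_1(N)\le\frac12\big(b_1(\partial M)-b_1(\partial N)\big)=\frac12\big(b_1(\partial_0W)-b_1(S)\big)$.

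I expect this step to be the main obstacle. The contribution of $H_1(W)$ to $H_1(M)$ modulo the image of $H_1(N)$ must be shown to have dimension at most $\frac12(b_1(\partial_0W)-b_1(S))$. This can be negative, which is consistent because $H_1(S)$ also kills classes. The cleanest route is to run half lives/half dies on $W$ simultaneously for both boundary pieces, via the Lagrangian $L=\ker(H_1(\partial W)\to H_1(W))$, which has dimension $\frac12 b_1(\partial W)$. One then tracks how much of $L$ projects onto $H_1(S)$ versus $H_1(\partial_0W)$. Classes of $H_1(N)$ dying in $M$ are those in the image of $L$ projected to $S$ and then pushed into $N$. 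Counting dimensions carefully gives the inequality, and I would check the $H_0$ and $H_2$ bookkeeping separately.

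\textbf{Sharpness.} Take $N=M$, or more informatively $N=M$ minus a ball: removing the ball adds a sphere boundary component, which contributes $0$ to $b_1$, so equality holds.
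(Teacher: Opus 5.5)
\textbf{The inequality you target in Step 3 is false, so that step cannot be completed.} The statement as printed has its inequality reversed. Take $N=B^3\subset M=S^1\times S^2$. Then $b_1(N)-\frac12 b_1(\partial N)=0<1=b_1(M)-\frac12 b_1(\partial M)$. In your notation $W=S^1\times S^2\setminus B^3$, $S=S^2$ and $\partial_0W=\emptyset$, so your target $b_1(M)-b_1(N)\le\frac12\bigl(b_1(\partial_0W)-b_1(S)\bigr)$ reads $1\le 0$. Since a ball embeds in every $3$-manifold, the printed version would force $b_1(M)\le\frac12 b_1(\partial M)$ for all $M$.

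The structural problem is already visible in your Step 2. You are trying to bound $b_1(M)$ from \emph{above} using only $N$ and the boundary surfaces. But the complement $W$ can carry arbitrary extra homology: connect-summing $W$ with copies of $S^1\times S^2$ raises $\dim H_1(W,S)$ and $b_1(M)$ while leaving $N$, $S$ and $\partial_0W$ unchanged. No amount of careful counting with the Lagrangian of $W$ will close Step 3.

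\textbf{What is true.} The paper gives no argument beyond the citation and the hint ``Mayer--Vietoris plus half lives, half dies''. What that hint actually proves is the reverse inequality
\[ b_1(N)-\tfrac12 b_1(\partial N)\le b_1(M)-\tfrac12 b_1(\partial M). \]
Your ingredients give it directly once they are pointed the other way.

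First shrink $N$ by a collar of $\partial N$ so that $N\subset\mathrm{int}(M)$. Then $S=\partial N$ and $\partial W=\partial N\sqcup\partial M$; this also removes the implicit assumption in your setup that $\partial N\cap\partial M$ is a union of components.

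Exactness of Mayer--Vietoris at $H_1(N)\oplus H_1(W)$ says the kernel of $H_1(N)\oplus H_1(W)\to H_1(M)$ is the image of $H_1(\partial N)$. Hence $b_1(M)\ge b_1(N)+b_1(W)-b_1(\partial N)$, with no $H_0$ or $H_2$ bookkeeping needed.

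Half lives, half dies applied to $W$ gives $b_1(W)\ge\frac12 b_1(\partial W)=\frac12 b_1(\partial N)+\frac12 b_1(\partial M)$. Substituting this into the previous bound yields the corrected inequality.

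Your sharpness example $N=M\setminus B^3$ still gives equality for the corrected version, as does a standard handlebody in $S^3$.
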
 
We refer to \cite[Thm 1.3]{Ton11} for a proof of this fact. It seems like sharper bounds using the torsion linking form may be obtained; however, the authors could not locate any such obstruction in the literature.

\medskip

When $\partial N$ has low genus, Question \ref{ques:mainProblem} becomes easier. For instance, if $\partial N=S^2,$ then Question \ref{ques:mainProblem} is equivalent to deciding whether $N$ is a connected summand of $M.$ In the case where $\partial N$ and $\partial M$ are unions of tori, there are also many tools at our disposal. We illustrate this with the following two obstructions, coming from hyperbolic geometry and from finite type invariants.

\begin{prop}
	\label{prop:GromovNormObs} Let $N$ be a compact oriented $3$-manifold with toroidal boundary and assume that $\partial N$ is incompressible. Let $M$ be an irreducible and atoroidal $3$-manifold with empty or toroidal boundary. Then if $N$ embeds in $M$ 
	$$||M||\leq ||N||$$
	where $||\cdot ||$ is the Gromov norm.
\end{prop}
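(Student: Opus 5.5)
We argue through the JSJ decomposition of $M$ and the additivity of the Gromov norm along incompressible tori. Suppose $N\subset M$. We may assume $N$ irreducible: if $N$ had an essential sphere, it would bound a ball in $M$ by irreducibility of $M$. That ball lies either in $N$ or on the other side. In the first case we can discard it. In the second case $N$ sits inside a ball, so its boundary tori are compressible in $M$, and we handle this in the next step. Let $X=\overline{M\setminus N}$. Its boundary consists of the tori $\partial N$, together with possibly some components of $\partial M$.

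Consider each boundary torus $T$ of $N$ viewed inside $M$. Since $M$ is atoroidal, $T$ is either compressible in $M$ or boundary parallel in $M$. As $T$ is incompressible in $N$, any compressing disc for $T$ in $M$ lies in $X$. Compressing $T$ in the irreducible manifold $M$ produces a sphere, which bounds a ball. Hence the component of $X$ adjacent to $T$ is either a solid torus or a knot exterior contained in a ball (the latter when $T$ is contained in a ball). In the boundary-parallel case, the component of $X$ adjacent to $T$ is a collar $T\times I$. So $M$ is obtained from $N$ in two steps. First, we attach collars along some boundary tori, which does not change the manifold. Second, on the remaining tori, we perform Dehn fillings, or more generally we glue pieces $X_i$ along a single torus where $X_i$ compresses. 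The case where a torus lies in a ball needs care: then $M$ contains $N$ inside a ball, and gluing a knot exterior side should make the resulting manifold $M$ reducible, or force $M=$ ball-like, unless that side is genuinely a solid torus. I would argue that $N\cup(\text{ball complement side})$ being all of $M$ forces the outer side to be a solid torus, because $T$ bounds a knot exterior on one side and $N$ on the other. The main obstacle is organizing this topological case analysis cleanly, especially when several boundary tori interact (for instance two tori of $\partial N$ cobounding a component of $X$, which must then be $T^2\times I$ by atoroidality and irreducibility).

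Once $M$ is exhibited as a Dehn filling of $N$ (up to collars), the conclusion follows from the standard fact (Gromov, Thurston) that the simplicial volume does not increase under Dehn filling: $\|N(\text{slopes})\|\le\|N\|$. This holds because the filled manifold is obtained by gluing solid tori, which have zero Gromov norm, along tori, which have amenable fundamental group, so subadditivity of the Gromov norm under gluing along amenable boundary applies. Alternatively, when $M$ is hyperbolic, one can use $\|\cdot\|=\mathrm{Vol}/v_3$ together with Thurston's theorem that volume decreases under filling; the general case then follows from Gromov's gluing estimate for the norm with amenable boundary. This yields $\|M\|\le\|N\|$.
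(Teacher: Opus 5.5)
Your strategy is the paper's: its proof consists of two sentences, ``$M$ is obtained from $N$ by Dehn filling'' and ``the Gromov norm does not increase under Dehn filling''. Your last paragraph correctly justifies the second. The gap is the first step. The paper asserts it without argument, and it fails exactly in the case you flagged.

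Suppose a torus $T\subset\partial N$ lies in a ball $B\subset M$, and $N$ lies on the side of $T$ inside $B$. Then the component of $\overline{M\setminus N}$ adjacent to $T$ contains $M\setminus B$, so it is not ``a knot exterior contained in a ball''. Nothing forces it to be a solid torus, so your claim that $N\cup(\text{ball complement side})=M$ forces the outer side to be a solid torus is false.

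In fact the proposition as stated is false. Let $N$ be the figure-eight knot exterior, so $\partial N$ is incompressible and $\|N\|=2$. Deleting a small ball from the interior of the tubular neighbourhood of the knot shows that $N$ embeds in $B^3$, hence in every $3$-manifold. Take $M$ closed hyperbolic with $\mathrm{Vol}(M)>2v_3$, where $v_3$ is the volume of the regular ideal tetrahedron. Then $M$ is irreducible and atoroidal, $\overline{M\setminus N}\cong (S^1\times D^2)\# M$, and $\|M\|=\mathrm{Vol}(M)/v_3>\|N\|$. Even $N=T^2\times[0,1]$, which has $\|N\|=0$, placed in a ball is a counterexample.

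Excluding balls does not rescue the argument either. You claim that a compressing disc for $T$ in $M$ lies in $X=\overline{M\setminus N}$, but an innermost-disc argument only justifies this when $\partial N$ is connected. With several boundary tori, the disc may have to cross other components of $X$. For example, write the Whitehead link exterior as $N=V\setminus\nu K_2$, with $V=S^3\setminus\nu K_1$ a solid torus. Identify $V$ with a tubular neighbourhood of a simple closed geodesic $\gamma$ in a closed hyperbolic $M$ with $\mathrm{Vol}(M)>\mathrm{Vol}(N)$. The torus $\partial V\subset\partial N$ compresses in $M$ via a meridian disc of $V$, which lies on the $N$ side and necessarily crosses the solid torus $\nu K_2$. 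But $\partial V$ is incompressible in the adjacent component $M\setminus\mathrm{int}\,\nu\gamma$ of $X$, since $\gamma$ is a geodesic. Again $\|M\|>\|N\|$.

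What your argument, and the paper's, does prove is the inequality under an extra hypothesis. Namely, every component of $\overline{M\setminus N}$ should be a solid torus or a collar of a component of $\partial M$, i.e.\ $M$ genuinely is a Dehn filling of $N$. More generally, Gromov's subadditivity under gluing along tori gives $\|M\|\le\|N\|+\|\overline{M\setminus N}\|$, so what is really needed is $\|\overline{M\setminus N}\|=0$.
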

\begin{proof}
	The hypotheses imply that if $N$ embeds in $M$ then $M$ is obtained from $N$ by Dehn filling some of its boundary components. However, by \cite{Gromov}, the Gromov norm decreases under Dehn filling.
\end{proof}
We note that Proposition \ref{prop:GromovNormObs} does not give any information when $M$ is a graph manifold.

The following proposition illustrates that finite type invariants may provide embedding obstructions in some specific situations.
\begin{prop}
	\label{prop:CassonObs} Let $N$ be a knot exterior in an integral homology sphere $M_0,$ with incompressible boundary. Then if $N$ embeds in an irreducible atoroidal integral homology sphere $M,$ one must have 
\[	\lambda(M)=\lambda(M_0) \pmod {\Delta_N''(1)}\]
	where $\lambda$ is the Casson invariant and $\Delta_N$ is the (symmetric version of the) Alexander polynomial of the knot whose exterior is $N.$
\end{prop}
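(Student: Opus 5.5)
The plan is to follow the pattern of Proposition \ref{prop:GromovNormObs}: first show that an embedding forces $M$ to be a Dehn filling of $N$, and then use the surgery formula for the Casson invariant.

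\textbf{Step 1: $M$ is a Dehn filling of $N$.} Suppose $N\subset M$. The boundary torus $T=\partial N$ is incompressible in $N$, and $M$ is atoroidal, so $T$ cannot be incompressible in $M$. Hence $T$ compresses in $M\setminus N$. Since $M$ is irreducible, compressing $T$ gives a sphere that bounds a ball. So $M\setminus \mathrm{int}(N)$ is either a solid torus or a knot exterior inside a ball. In the second case $N$ would lie in a ball together with a knotted complement; I expect this to be excluded. If $N$ lies in the ball, then $T$ bounds a knot exterior in $S^3$ on the other side, and one uses atoroidality of $M$ or homology to rule this out (this is a standard argument). We conclude $M=N(\alpha)$ for some slope $\alpha$ on $T$.

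\textbf{Step 2: constrain the slope.} $M$ and $M_0$ are both integral homology spheres, and $M_0=N(\mu)$ with $\mu$ the meridian. Since $H_1(N)\cong\Z$ is generated by the meridian class, $H_1(N(\alpha))=\Z/|\alpha\cdot\lambda|$, where $\lambda$ is the homological longitude. Having $H_1(M)=0$ forces $\alpha=\pm(\mu+n\lambda)$, that is, $M$ is $1/n$ surgery on the knot $K\subset M_0$ whose exterior is $N$.

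\textbf{Step 3: apply Casson's surgery formula.} For a knot $K$ in an integral homology sphere,
$$\lambda(M_0(K,1/n))=\lambda(M_0)+\frac{n}{2}\Delta_K''(1).$$
Here $\Delta_K$ is symmetric and normalized with $\Delta_K(1)=1$, and $\frac12\Delta''_K(1)$ is an integer. This gives $\lambda(M)-\lambda(M_0)\in \frac12\Delta_N''(1)\Z$. To reach the stated congruence modulo $\Delta_N''(1)$, I will need to check the normalization conventions. With the convention where $\lambda$ is half of Casson's original normalization (or with $\Delta''$ meaning $a_2$ times $2$), the statement follows directly. The note I plan to add is that the formula shows the congruence holds modulo $\tfrac12\Delta_N''(1)$ in Casson's normalization, matching the paper's statement up to convention.

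\textbf{Main obstacle.} I expect the main obstacle to be Step 1, specifically ruling out that $N$ sits inside a ball in $M$ with $M\setminus N$ not a solid torus. If $N$ lies in a ball $B\subset M$, then $B\setminus N$ is a knot exterior in $S^3$. If that knot is nontrivial, its boundary $T$ is incompressible on both sides, which contradicts atoroidality. If the knot is trivial, then $M\setminus N$ is a solid torus after all. The other subcase, where the compressing disk lies on the other side and $M\setminus N$ is a solid torus, is exactly the Dehn filling case.
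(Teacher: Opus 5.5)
Your route (the embedding forces a Dehn filling, $H_1(M)=0$ forces slope $1/n$, then Casson's surgery formula) is the same as the paper's. But the ball case in Step 1 is a genuine gap, and it cannot be closed.

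Suppose $N\subset B\subset M$ with $B$ a ball, and cap $B$ off to $S^3$. By Alexander's theorem the side of $T$ opposite $N$ is a solid torus $V$; it cannot be $N$, whose boundary is incompressible. So $B\setminus \mathrm{int}N$ is a punctured solid torus, not a knot exterior, and $M\setminus \mathrm{int}N\cong M\# V$. Then $T$ compresses on that side, so atoroidality of $M$ is never contradicted. Also $M\# V$ is a solid torus only if $M\cong S^3$. Hence both of your subcases (``incompressible on both sides'' and ``a solid torus after all'') are wrong.

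This case always occurs when $N$ is the exterior of a knot in $S^3$. Deleting a point of the complementary solid torus puts $N$ inside $\mathbb{R}^3$, so $N$ embeds in every $3$-manifold. Concretely, take $N$ the exterior of $5_2$, with $\Delta_N(t)=2t-3+2t^{-1}$ and $\Delta_N''(1)=4$. Take $M_0=S^3$, and $M$ the Poincar\'e homology sphere, which is irreducible and atoroidal with $\lambda(M)=\pm1$. The claimed congruence fails modulo $4$ and even modulo $2$, so it fails in any normalization.

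The paper's proof makes the same Dehn-filling assertion without treating this case. It does so by reference to the proof of Proposition \ref{prop:GromovNormObs}, which has the same defect. So the gap lies in the statement itself, and an extra hypothesis is needed: for example, that $N$ does not embed in $S^3$, or that the embedded copy of $N$ is not contained in a ball of $M$. Under such a hypothesis the compression argument does give $M\setminus\mathrm{int}N\cong S^1\times D^2$, and your Step 2 is then correct.

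In Step 3 your suspicion is justified, but the proposed fix is backwards. In Casson's normalization ($\lambda(\Sigma(2,3,5))=\pm1$, as in Saveliev's lectures, which the paper cites) one has $\lambda(N(\tfrac{1}{k+1}))-\lambda(N(\tfrac1k))=\tfrac12\Delta_N''(1)=a_2$, the $z^2$-coefficient of the Conway polynomial. So the argument only gives a congruence modulo $\tfrac12\Delta_N''(1)$. The trefoil exterior filled to the Poincar\'e sphere shows that the statement modulo $\Delta_N''(1)$ is false in this normalization: it is a genuine Dehn filling, the difference of Casson invariants is $\pm1$, and $\Delta_N''(1)=2$. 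The paper's displayed surgery formula drops this factor $\tfrac12$.

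The stated modulus is correct for Walker's normalization $\lambda_W=2\lambda$, i.e.\ \emph{twice} Casson's invariant, not half. Also, ``$\Delta''$ meaning $2a_2$'' is not an alternative convention, since $\Delta_N''(1)=2a_2$ always holds.
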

\begin{proof}
	As in the proof of Proposition \ref{prop:GromovNormObs}, the hypotheses imply that if $N$ embeds in $M,$ then $M$ is actually a Dehn filling of $N.$ Moreover, $M$ being an integral homology sphere, we have $M=N(1/k)$ for some $k\in \Z.$ However, the surgery formula for the Casson invariant (see for instance \cite[Lecture 12]{Saveliev}) states that
	$$\lambda\left(N\left(\frac{1}{k+1}\right)\right)-\lambda\left(N\left(\frac{1}{k}\right)\right)=\Delta_N''(1),$$
	for any $k\in \Z.$ Therefore we get that 
	$$\lambda(M_0)=\lambda(N(1/0))=\lambda(M) \pmod {\Delta_N''(1)}.$$
\end{proof}

Geometric decompositions of $3$-manifolds may also be used to provide embedding obstructions. Recall that by Kneser's theorem, any closed $3$-manifold has a unique decomposition $M=M_1\#\ldots \# M_k$ where the $M_i$'s are prime $3$-manifolds. Moreover, the Jaco-Shalen-Johannson decomposition \cite{JS,J} states that any closed irreducible $3$-manifold has a unique decomposition along embedded tori into atoroidal or Seifert fibered $3$-manifolds.

\begin{prop}\label{prop:GeomDecObs} Let $M$ be a closed $3$-manifold and $N$ be a compact oriented $3$-manifold.
	\begin{enumerate}
		\item If $M$ is irreducible and $N=N_1\# N_2,$ then $N$ embeds in $M$ if and only if either $N_1$ embeds in $M$ and $N_2$ embeds in $S^3$ or vice versa.
		\item If $M$ is irreducible and atoroidal and $N=N_1\cup_{T^2} N_2$ is the JSJ decomposition of $N,$ then if $N$ embeds in $M$ then either $N_1$ or $N_2$ embeds in a solid torus and a Dehn filling of the other embeds in $M.$  
	\end{enumerate}
\end{prop}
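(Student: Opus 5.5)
For part (1), I will work with the sphere $S$ realizing the connected sum $N=N_1\# N_2$, viewed inside $M$. Suppose $N\subset M$. Then $S$ is an embedded $2$-sphere in $M$, and since $M$ is irreducible it bounds a ball $B\subset M$. The sphere $S$ splits $N$ into punctured copies $N_1^\circ$ and $N_2^\circ$, and exactly one of them, say $N_2^\circ$, lies in $B$ (the two sides of $S$ in $N$ are the two sides in $M$). Then $N_2^\circ\subset B\subset S^3$. Capping off the boundary sphere $S$ of $N_2^\circ$ with the complementary ball $S^3\setminus B$ gives $N_2\subset S^3$. Similarly, $N_1^\circ\subset M\setminus \mathrm{int}(B)$, and gluing $B$ back in shows that $N_1$ embeds in $M$.

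For the converse, assume $N_1\subset M$ and $N_2\subset S^3$. I will take a small ball in the interior of $N_1\subset M$ and replace it by $N_2^\circ=N_2\setminus\text{ball}$. This is possible because $N_2^\circ$ sits inside a ball in $S^3$, namely the complement of a small ball chosen in $S^3\setminus N_2$. I need $S^3\setminus N_2\neq\emptyset$; if it were empty then $N_2=S^3$ and the statement is trivial. The result is $N_1\# N_2$ embedded in $M$. I will also check orientations: an embedding into $S^3$ can be composed with a reflection, so the orientation convention can always be matched.

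For part (2), let $T$ be the JSJ torus and suppose $N\subset M$. Since $M$ is atoroidal and irreducible, the torus $T\subset M$ is either compressible in $M$ or boundary-parallel. Because $T$ is a separating torus inside closed $M$, boundary-parallel is impossible, so $T$ compresses in $M$. A compressible torus in an irreducible manifold bounds a solid torus, or it lies in a ball and bounds a knot exterior in that ball. The compressing disk lies on one side of $T$, on the side of $N_1$ or $N_2$ in $M$. It cannot be an essential disk inside $N_i$ itself, since $T$ is incompressible in $N$. So the disk meets $\partial N$.

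This is the main obstacle. I will handle it by surgering $T$ along the compressing disk to get a sphere. That sphere bounds a ball, by irreducibility. Then I analyze which side contains which piece: one side $V$ of $T$ is a solid torus or is contained in a ball, and that side contains some $N_i$, say $N_2$. In the ball case, $N_2$ embeds in a ball, hence in a solid torus. In the solid torus case, $N_2\subset V$ directly.

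The other piece satisfies $N_1\subset M\setminus \mathrm{int}(V)$. In the solid-torus case, $M$ is $(M\setminus V)\cup V$, which is a Dehn filling of $M\setminus \mathrm{int} V$ along $T$. That manifold contains $N_1$ with $T$ as a boundary component shared with $N_1$, so the Dehn filling $N_1(T)$ along the meridian of $V$ embeds in $M$.

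In the ball case, $V$ is a knot exterior in a ball $B$ and $M\setminus V$ is $(M\setminus B)\cup$ a solid-torus neighborhood of the knot. I need to rearrange this so that a filling of $N_1$ embeds. I plan to swap the roles of the two pieces: $N_1$ lies in $M\setminus V$, and $M\setminus V$ is itself a solid torus union $(M\setminus B)$ glued along a sphere. By irreducibility I will try to push $N_1$ into the solid torus part, making $N_1$ the piece that embeds in a solid torus. Then I apply the Dehn filling argument to $N_2$, filling $V$ with the complementary solid torus, which gives $B\cong$ ball in $M$.

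The care needed is in checking which side the compressing disk lies on in each case.
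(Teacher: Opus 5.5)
Part (1) and the solid-torus case of part (2) follow the paper's argument. In fact the paper's proof of (2) consists only of that case: it asserts that a compressible torus in an irreducible $M$ bounds a solid torus. So you are right that the case where $T$ lies in a ball and bounds a knot exterior there needs separate treatment.

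There is a small slip in the converse of (1). The ball you substitute for a small ball $D_1\subset\mathrm{int}N_1$ must have the puncture sphere of $N_2^\circ$ as its boundary. So take $D_2\subset\mathrm{int}N_2$ and use $S^3\setminus\mathrm{int}D_2\supset N_2^\circ$. With the complement of a ball chosen in $S^3\setminus N_2$, the pieces $N_1\setminus\mathrm{int}D_1$ and $N_2^\circ$ end up disjoint rather than glued, and the case $S^3\setminus N_2=\emptyset$ plays no role.

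The genuine gap is the ball case of (2). You already have half of the conclusion there, since $N_2\subset V\subset B$ embeds in a ball, hence in a solid torus. But the proposed swap fails. Irreducibility gives no way to isotope $\partial B$ off $N_1$. In general $N_1$ embeds in no solid torus, and no filling of $N_2$ embeds in $M$. Here is an example:
\begin{itemize}
\item Take $M$ closed hyperbolic with $b_1(M)\geq 2$, let $V=N_2=E(K)$ for a hyperbolic knot $K$ placed in a ball as above, and let $N_1$ be the exterior of a knot in $M\setminus\mathrm{int}V$ chosen so that $N_1$ is hyperbolic.
\item Then $H_1(N_1;\mathbb{Q})$ surjects onto $H_1(M\setminus\mathrm{int}V;\mathbb{Q})\cong H_1(M;\mathbb{Q})\oplus\mathbb{Q}$, so $b_1(N_1)\geq 3$. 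By Alexander duality, a submanifold of $S^3$ bounded by two tori has $b_1=2$, so $N_1$ does not embed in a solid torus.
\item The fillings of $N_2$ are closed with $b_1\leq 1$, so none embeds in $M$. In particular, filling $V$ along its meridian gives $S^3$, not a ball.
\end{itemize}

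The missing idea is to fill the other side directly, with no swap. The compressing disk $D$ lies in $M\setminus\mathrm{int}V$ and $B=V\cup N(D)$. Hence $M\setminus\mathrm{int}V=(M\setminus\mathrm{int}B)\cup N(D)$ is $M$ minus a ball with a $1$-handle attached, i.e.\ $M\#(S^1\times D^2)$ with meridian $\partial D$. Fill along any slope $\lambda\subset T$ meeting $\partial D$ once. The filling solid torus together with the $2$-handle $N(D)$ is then a ball, so the filled manifold is $(M\setminus\mathrm{int}B)\cup B^3\cong M$. Restricting this to $N_1$ gives an embedding of the Dehn filling $N_1(\lambda)$ into $M$.
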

\begin{proof}
	For the first claim, if $M=N\cup_{\partial N}N'$ where $\partial N'\simeq \partial N,$ then the essential sphere in $N$ bounds a ball $B^3$ on one side. If the ball lies on the $N_1$-side, then $N_1$ embeds in $B^3,$ and thus in $S^3.$ Moreover, we also get that $N_2$ may be filled to get $M,$ hence the claim. The converse is easy as $M\# S^3\simeq M.$
	
	For the second claim, the incompressible torus in $N$ becomes compressible in $M,$ hence bounds a solid torus on one side. Therefore, $N_1$ or $N_2$ embeds in a solid torus, and we also get that a Dehn filling of the other piece embeds in $M.$
\end{proof}

Obstructions using geometric decompositions become more difficult to state when the number of pieces increases. In particular, we note that there are $3$-manifolds with arbitrarily many JSJ pieces that embed in $S^3.$

To finish the overview of Question \ref{ques:mainProblem}, we quote two recent results that provide partial algorithmic answers to the embedding problem:

\begin{thm}
	\label{thm:algo}
	\begin{enumerate}
		\item \cite[Theorem 1.3]{MST18} Given a compact oriented $3$-manifold $N,$ there exists an algorithm that decides whether $N$ embeds in $S^3.$
		\item \cite{Sch25} Given two compact oriented $3$-manifolds $N,M$ with toroidal or empty boundary, there exists an algorithm that decides whether $M$ is a Dehn filling of $N.$
	\end{enumerate}
\end{thm}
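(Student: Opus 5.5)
Both parts of Theorem \ref{thm:algo} are quoted from the literature, so strictly speaking the proof is a reference to \cite[Theorem 1.3]{MST18} and \cite{Sch25}. Still, here is the strategy I would follow to reconstruct them.

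For part (1), the first step is to reduce to the case where $\partial N$ is incompressible. By Fox's re-embedding theorem, if $N$ embeds in $S^3$ then it embeds so that its complement is a union of handlebodies. So I would:
\begin{enumerate}
\item Algorithmically find compressing disks for $\partial N$ via normal surface theory (Haken's algorithm).
\item Enumerate the finitely many ways of attaching $2$-handles and $1$-handles along these disks, reducing the complexity of $\partial N$.
\item Keep track of the finitely many candidate complementary pieces at each stage.
\end{enumerate}
At the bottom of the recursion one must decide whether a manifold $N_0$ with incompressible boundary, together with handlebodies glued along its boundary, gives $S^3$. Deciding whether a given closed manifold is $S^3$ is possible by Rubinstein--Thompson $3$-sphere recognition. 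The difficulty is that there are infinitely many gluings of handlebodies, that is, infinitely many choices of meridian systems.

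The main obstacle is bounding this infinite set of gluings. My first attempt would be to show that, when $\partial N_0$ is incompressible, any embedding into $S^3$ can be isotoped so that the meridian disks of the complement meet a fixed triangulation of $N_0$ in normal curves of bounded complexity. I would argue this by a normal-surface/thin-position argument on the boundary pattern. The point is that incompressibility of $\partial N_0$ forces the relevant disks in the complement to be essential. A Haken-style finiteness argument then leaves finitely many candidate gluings up to the action of homeomorphisms of $N_0$, and these homeomorphisms can be enumerated.

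For part (2), I would use geometrization and the JSJ decomposition, both of which are algorithmic for compact $3$-manifolds with toral boundary. The plan is:
\begin{enumerate}
\item Reduce to the case where $N$ and $M$ are prime and the filled cusps lie in distinguished pieces.
\item On a hyperbolic piece, use that all but finitely many fillings are hyperbolic, with computable exceptional slopes (the $6$-theorem, made effective by computing cusp shapes from a certified hyperbolic structure).
\item For the non-exceptional slopes, compare $\mathrm{Vol}$ and the geometry of short geodesics with those of $M$ to bound the slope length. This reduces the candidates to a finite list, which are tested with the homeomorphism algorithm.
\item For exceptional slopes and Seifert fibered pieces, handle Dehn filling explicitly via Seifert invariants.
\end{enumerate}
The hardest step is controlling fillings that change the JSJ structure; I would handle these by induction on the number of JSJ pieces.
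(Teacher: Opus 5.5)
Quoting \cite[Theorem 1.3]{MST18} and \cite{Sch25} is exactly what the paper does. It gives no proof of its own, only a rough description of each algorithm, and your outlines agree with that description in broad strokes: a bounded-length meridian for (1), and for (2) the fact that long slopes on hyperbolic pieces force small injectivity radius, followed by Kuperberg's homeomorphism algorithm \cite{Kup19}. As a reconstruction, however, your sketch leaves open precisely the steps that make up the content of the cited papers.

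In (1) you locate the difficulty correctly, but the tool you propose cannot resolve it. The meridian disks lie in $S^3\setminus N_0$, which is not triangulated and has no a priori relation to the triangulation of $N_0$. Normal surface theory in $N_0$ controls surfaces in $N_0$, and essentiality of these disks gives no bound on the length of their boundaries in $\partial N_0$. The statement that makes the recursion work is the short-meridian theorem of \cite{MST18}: if $N$ embeds in $S^3$, then \emph{some} embedding, possibly different from a given one, has a \emph{single} meridian whose length is bounded by a computable function of the number of tetrahedra. Proving this is the main work of that paper, which (as the paper notes) uses the $0$-efficient triangulations of \cite{JR03}. Your version asks for a bounded complete meridian system for \emph{every} embedding, and hence finitely many handlebody gluings up to homeomorphisms of $N_0$. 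That is much stronger, you assert it without argument, and it is not needed. The algorithm instead attaches one $2$-handle along each short candidate curve and recurses, returning to the compression step whenever the new boundary becomes compressible.

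In (2), your slope bound via volume and short geodesics only handles hyperbolic pieces. On Seifert fibred pieces infinitely many slopes survive: fixing the order of the new exceptional fibre still leaves slopes differing by arbitrary multiples of the fibre. When $N$ has several JSJ pieces, these parameters are coupled. Choices on different cusps interact through Euler numbers and the gluing maps of the graph-manifold structure. Pieces that become solid tori or products after filling are absorbed into their neighbours, which changes the gluings. So deciding whether some choice of slopes yields $M$ is neither a finite check nor something that splits piece by piece, and ``induction on the number of JSJ pieces'' does not address it. This is exactly where \cite{Sch25} needs the decidable system of Diophantine equations mentioned in the paper. Two smaller issues: exceptional fillings of hyperbolic pieces may be reducible or toroidal rather than Seifert fibred, so your step (4) does not cover them. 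Also, an irreducible $N$ can have reducible fillings, so $M$ cannot be assumed prime.
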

The algorithm of \cite{MST18} uses the theory of $0$-efficient triangulations of $3$-manifolds developed in \cite{JR03}. The main technical step is to show that if $N$ embeds in $S^3,$ there exist a curve $\gamma \subset \partial N$ of bounded length which bounds a disk in $S^3.$

A rough description of the algorithm of \cite{Sch25} is as follows. Dehn filling along non-short slopes or non-fibered slopes respects the JSJ decomposition, and Dehn fillings along long slopes of hyperbolic pieces create hyperbolic manifolds with small injectivity radius. In the case of one JSJ piece for $N,$ one can then reduce the study to Dehn fillings along a finite set of slopes and apply Kuperberg's algorithm \cite{Kup19} for the homeomorphism problem for $3$-manifolds. The case where $N$ has more JSJ pieces requires in addition a delicate analysis of a decidable system of Diophantine equations.

Both algorithms above are computationally difficult to apply in practice, and hard to use to establish results on infinite families of $3$-manifolds.

Meanwhile, one can see that the obstructions described in this section are restricted, requiring hypotheses on the genus of $\partial N$ or the geometric decomposition of $M.$ We will see that the Frohman--Kania-Bartoszy\'nska obstructions that we will introduce in the next section retain the flexibility of the homological obstruction of Lemma \ref{lem:homolBound}, while being more powerful.

\section{Definitions and background}
\label{sec:definitons}
\subsection{Equivalence relations on $3$-manifolds}
\label{sec:equivRel}
\begin{dfn}
	Any element $\phi\in \mathrm{Mod}(\Sigma)$ induces an action on $H_1(\Sigma,\Z)$. The \emph{Torelli subgroup} of $\mathrm{Mod}(\Sigma)$, denoted by $I(\Sigma)$, is the subgroup formed by elements acting trivially (i.e., like the identity) on $H_1(\Sigma,\Z)$. In other words, it is the kernel of the natural map $\mathrm{Mod}(\Sigma)\ra \textrm{Aut}(H_1(\Sigma;\Z))$.
\end{dfn}

A filtration on the Torelli group is given by its \emph{lower central series}. Recall that the lower central series of a group $G$, denoted by $\Gamma_{k}G$, is defined recursively by $\Gamma_1 G:= G$, $\Gamma_{k+1}G=[\Gamma_k G,G]$. 

We will be interested in another family of subgroups of $\mathrm{Mod}(\Sigma):$ the subgroup $T_n\Sigma$ generated by all $n$-th powers of Dehn twists. Since conjugates of Dehn twists are themselves Dehn twists, $T_n\Sigma$ is also a normal subgroup of $\mathrm{Mod}(\Sigma).$ So as to not overcomplicate notation, we will usually drop the $\Sigma$ when talking about this subgroup and simply refer to it as $T_n$. When $\Sigma$ is closed of genus $g\geq 2,$ a theorem of Funar \cite{Fun99} shows that the subgroup $T_n$ is of infinite index in $\mathrm{Mod}(\Sigma)$ for any odd $n\geq 5;$ see also \cite{Mas98}.

A way of producing a new $3$-manifold $M'$ out of a $3$-manifold $M$ is by a local modification along a properly embedded surface $\Sigma$ called a \emph{Torelli twist}:

\begin{dfn}\label{def:TorelliTwist}
	Let $M$ be a compact oriented $3$-manifold, $\Sigma\subseteq M$ be a properly embedded compact oriented surface with at most one boundary component, and $\phi\in I(\Sigma)$. The \emph{Torelli twist} of $M$ along $\Sigma$ by $\phi$ is the manifold 
	\[M_\phi:= \left(M\setminus U(\Sigma)\right)\cup_{\widetilde{\phi}} U(\Sigma)\]
	where $U(\Sigma)$ is a regular neighborhood of $\Sigma$ and $\widetilde{\phi}$ is the diffeomorphism of $\partial U(\Sigma)$ given by $\phi$ on $\Sigma\times \{1\}$ and the identity on the other component.
\end{dfn}

The notion of Torelli twist allows one to define equivalence relations on $3$-manifolds, depending on the subgroups from which the maps $\phi$ are taken. A standard equivalence relation of this kind is the $Y_k$-equivalence:

\begin{dfn}\label{def:YkEquiv}
We say that two compact oriented $3$-manifolds $M,M'$ are $Y_k$-equivalent, and write $M\underset{Y_k}{\sim}M',$ if $M'$ can be obtained from $M$ by a sequence of Torelli twists along surfaces $\Sigma_1,\ldots \Sigma_d,$ and such that at each step, the gluing map is $\phi_i \in \Gamma_k I(\Sigma_i).$
\end{dfn}
With this definition, it is clear that $\underset{Y_k}{\sim}$ is an equivalence relation. However, a folklore result is that a single Torelli twist along a Heegaard surface is sufficient to relate two $Y_k$-equivalent $3$-manifolds.

In this paper, we will work with some finer equivalence relations than $Y_k$-equivalence, and which (to the authors' knowledge) have not been considered before.

\begin{dfn}\label{def:YkTnEquiv}
	Let $k\geq 1$ and let $n\geq 2$ be integers.
	We say that two compact oriented $3$-manifolds $M,M'$ are $(Y_k,T_n)$-equivalent, and write $M\underset{(Y_k,T_n)}{\sim}M',$ if $M'$ can be obtained from $M$ by a sequence of Torelli twists along surfaces $\Sigma_1,\ldots \Sigma_d,$ and such that at each step, the gluing map is $\phi_i \in \Gamma_k I(\Sigma_i)\cap T_n(\Sigma_i).$
\end{dfn}

We will also sometimes refer to $T_n$-equivalence as the relation obtained when we just require that the gluing maps $\phi_i\in T_n(\Sigma_i).$

We will look at one final variation of these equivalence relations on $3$-manifolds. 

\begin{dfn}\label{def:Hequiv}
Suppose $\mathcal{H}:=\left(H_{g}\right)_{g\geq 1}$ is a family of finite index, normal subgroups of $\mathrm{Mod}(\Sigma_{g})$.

Then we say that $M$ is $\mathcal{H}$-equivalent to $M'$ if $M'$ can be obtained from $M$ by a sequence of Torelli twists along surfaces $\Sigma_{g_1},\ldots, \Sigma_{g_d},$ with the $i$-th gluing map $\phi_i$ belonging to $H_{g_i}.$
\end{dfn}

Our main choice of finite index subgroups will come from kernels of Dijkgraaf-Witten TQFT representations. Fix a  finite collection of finite groups $\mathcal{G}=\lbrace G_1,\ldots ,G_r \rbrace$; we can set 
\[H_g=\underset{i=1}{\overset{n}{\bigcap}} \mathrm{Ker} \rho_{g,G_i},\]
where $\rho_{g,G_i}$ is the representation of $\mathrm{Mod}(\Sigma_g)$ coming from the untwisted Dijkgraaf-Witten TQFT with gauge group $G$ (see Section \ref{sec:DW-TQFT} for an overview of those TQFTs).

\begin{dfn}\label{def:YkTnGEquiv}
	Let $k\geq 1, n\geq 2$ be integers and let $\mathcal{G}=\lbrace G_1,\ldots ,G_r\rbrace$ be a finite collection of finite groups.
	We say that two compact oriented $3$-manifolds $M,M'$ are $(Y_k,T_n,\mathcal{G})$-equivalent, and write $M\underset{(Y_k,T_n,\mathcal{G})}{\sim}M',$ if $M'$ can be obtained from $M$ by a sequence of Torelli twists along surfaces $\Sigma_1,\ldots \Sigma_d,$ and such that at each step, the gluing map is 
	\[\phi_i \in \Gamma_k I(\Sigma_i)\cap T_n(\Sigma_i)\cap \left(\underset{j=1}{\overset{m}{\bigcap}}\mathrm{Ker} \rho_{G_j,\Sigma_i}\right).\]
\end{dfn}

\subsection{Generalities on TQFTs}
Let $\mathcal{C}_n$ be the category of $n+1$-cobordisms; in other words, the objects of $\mathcal{C}_n$ are $n$-dimensional,smooth oriented closed manifolds, and the morphisms between two objects are given by cobordisms between the manifolds (up to diffeomorphisms that are the identity on the boundary). This category has a natural monoidal structure (i.e. a tensor product) given by disjoint union, with the unit object given by the empty manifold, and a natural duality given by orientation reversal.

\begin{dfn}[Atiyah-Segal axioms for a TQFT]	
	Let $R$ be a commutative ring with $1$ and with a conjugation. A \emph{($n+1)$-topological quantum field theory} is a rigid monoidal functor $Z$ from $\mathcal{C}_n$ to the category of $R$-modules with the additional property that for each cobordism $M$, $Z(M^*)=\overline{Z(M)}$.	
\end{dfn}

An $(n+1)$-dimensional TQFT provides two major tools: an invariant of closed $n+1$-manifolds and a representation of mapping class groups of $n$-dimensional manifolds. For the former, a closed $n+1$-dimensional manifold $M$ is a cobordism between the empty manifold and itself; therefore $Z(M)$ is a map from $R$ to itself, i.e. multiplication by an element of $R$. For the latter, a diffeomorphism $\phi: \Sigma\ra \Sigma$ gives a mapping cylinder $T_\phi$ which in turn gives a map $Z(\phi): Z(\Sigma)\ra Z(\Sigma)$. Because of the invariance under diffeomorphism of $Z$, this induces a map $\rho_Z$ from the mapping class group of $\Sigma$ to the endomorphisms of $Z(\Sigma)$. 

Given two manifolds $M_1,M_2$ with boundary $\Sigma$, we can consider $Z(M_1\cup \overline{M_2})$, with the gluing happening along the natural identification of the boundaries. This extends to a bilinear pairing $\langle\cdot,\cdot\rangle$ on $Z(\Sigma)$; in the TQFTs we will consider, this pairing is non-degenerate and it is straightforward to check that it is preserved by $\rho_Z$. Furthermore, if $M= M_1\cup_\phi M_2$, with $\phi$ an identification between $\partial M_1$ and $\partial M_2$, the axioms of TQFT immediately give $Z(M)=\langle Z(M_1),\rho_Z(\phi)Z(M_2)\rangle$. In a similar way, if $M$ is a mapping torus of a diffeomorphism $\phi:\Sigma\ra \Sigma$, then $Z(M)=\mathrm{tr} \rho_Z(\phi)$.

We will only be interested in $2+1$ TQFTs, and specifically we will look at two kinds: the Dijkgraaf-Witten TQFTs (see Subsection \ref{sec:DW-TQFT}) and the $SO(3)$ Witten-Reshetikhin-Turaev TQFTs \cite{BHMV}. In the former case, the base ring will be $\C$; in the latter, it will be $\Z\left[p^{-1},\zeta_p\right]$.

\begin{rem}
	Technically, the $SO(3)$ Witten-Reshetikhin-Turaev TQFT is not a TQFT in the Atiyah-Segal sense; rather, manifolds have an additional weak datum called a $p_1$-structure. In practice, this can be ignored except for the fact that the mapping class group representations are actually projective representations, i.e. they are maps from $Mod(\Sigma)$ to the projective linear group. 
\end{rem}

\subsection{TQFT representations of the mapping class group}
\label{sec:quantumRep}

We denote by $RT_p$ the functor given by the $SO(3)$ Witten-Reshetikhin-Turaev TQFT at prime level $p$.

The mapping class group of a connected $\Sigma_g$ acts projectively on $RT_p(\Sigma_g)$; in other words, we have a map $$\rho_p: Mod(\Sigma_g)\ra \mathrm{PSL}_{d_{p,g}}(\Z\left[p^{-1},\zeta_p\right])$$
where $d_{p,g}$ is the dimension of $RT_p(\Sigma_g)$; it is easy to see that $d_{p,g}$ is increasing in $g$. Because of the TQFT axioms, this map preserves a Hermitian form and so it actually has image in $PSU_{d_{p,g}}$.

In \cite{MR12}, Masbaum and Reid established that for $g\geq 2$ and $p\geq 5$ a prime, the reductions of the quantum representation $\rho_p$ of $\mathrm{Mod}(\Sigma_g)$ modulo infinitely many maximal ideals of $\Z[\zeta_p]$ are surjective; the latter result is sometimes referred to as \emph{strong approximation} for the $\mathrm{SO}_3$ WRT quantum representations.

\begin{thm}\cite[Theorem 1.2]{MR12}\label{thm:MasbaumReid}
	For every $g\geq 2$, there exist infinitely many primes $q$ such that $\rho_{p,q}: Mod(\Sigma_g)\ra \mathrm{PSL}_{d_{p,g}}(\mathbb{F}_q)$ is surjective, where $\rho_{p,q}$ is equal to $\rho_p \pmod{ J}$ with $J$ a maximal ideal of $\Z\left[p^{-1},\zeta_p\right]$ such that $\Z\left[p^{-1},\zeta_p\right]/J=\mathbb{F}_q$.
\end{thm}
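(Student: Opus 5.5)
The plan is to deduce surjectivity modulo $J$ from three inputs: integrality of the quantum representation, Zariski density of its image, and Pink's (or Weisfeiler's) strong approximation theorem for finitely generated Zariski-dense subgroups of semisimple groups.

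\textbf{Step 1: integrality.} I would first replace the projective representation $\rho_p$ by a genuine linear representation of a finite central extension (or of a finite index subgroup, e.g. the commutator subgroup, which equals $\mathrm{Mod}(\Sigma_g)$ for $g\geq 3$). I would then use the Gilmer--Masbaum integral lattice in $RT_p(\Sigma_g)$ to see that the image lies in $\mathrm{SL}_{d_{p,g}}(\mathcal{O})$, with $\mathcal{O}=\Z[\zeta_p]$ or its localization at $p$. The invariant Hermitian form is defined over the real subfield $K^+=\mathbb{Q}(\zeta_p+\zeta_p^{-1})$, with the involution given by $\zeta_p\mapsto\zeta_p^{-1}$. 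So the image is an arithmetic-type subgroup of $G(\mathcal{O}^+)$, where $G=\mathrm{SU}(h)$ is a special unitary group over $K^+$, an outer form of $\mathrm{SL}_d$.

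\textbf{Step 2: Zariski density.} I would invoke Larsen--Wang density of quantum representations: for $p\geq5$ prime, the image of $\rho_p$ is dense in $\mathrm{PSU}(d_{p,g})$ in the analytic topology. Since $\mathrm{SU}$ is a compact form of $\mathrm{SL}_d$, this gives Zariski density of the image in $G$ after restriction of scalars. More precisely, I would take the trace field $k$ of $\mathrm{Ad}\,\rho_p$, show it equals $K^+$ (traces of images of Dehn twists and of products of them generate $\zeta_p+\zeta_p^{-1}$), and use density at each Galois embedding: the Galois conjugates of $\rho_p$ are again quantum representations at other primitive roots, hence also dense. This gives Zariski density of the image in $\mathrm{Res}_{K^+/\mathbb{Q}}\,G$.

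\textbf{Step 3: strong approximation.} Pink's theorem then states that the closure of the image in $\prod_{\mathfrak{q}} G(\mathcal{O}^+_{\mathfrak{q}})$ is open, after passing to the simply connected cover. Consequently, for all but finitely many primes $\mathfrak{q}$ of $\mathcal{O}^+$, reduction mod $\mathfrak{q}$ is surjective onto $G(\mathbb{F}_{\mathfrak{q}})$. Now restrict to rational primes $q\equiv 1 \pmod p$, of which there are infinitely many by Dirichlet. These split completely in $\mathbb{Q}(\zeta_p)$, so the residue field is $\mathbb{F}_q$ and $\mathfrak{q}$ splits in $K/K^+$. For such $\mathfrak{q}$ the unitary group becomes the split group, $G(\mathbb{F}_q)\cong\mathrm{SL}_d(\mathbb{F}_q)$. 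Projectivizing then yields surjectivity onto $\mathrm{PSL}_{d_{p,g}}(\mathbb{F}_q)$, modulo the maximal ideal $J$ above $\mathfrak{q}$.

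\textbf{Main obstacle.} I expect the hardest step to be controlling the trace field and the adjoint form carefully enough to apply strong approximation with the correct field. One must show the image is not contained in a group defined over a smaller field (a proper subfield of $K^+$), and handle the projective ambiguity and central extension. To settle the trace field, I would first compute explicit traces of Dehn twists and of products of two twists, using the standard formulas for $RT_p$ on the genus-one and genus-two pieces, to show they generate $K^+$.
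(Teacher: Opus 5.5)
Your outline is correct and is essentially the Masbaum--Reid proof, which the paper does not reproduce: it simply cites \cite[Theorem 1.2]{MR12}, with a remark pointing to \cite[Remarks 3.1 and 3.5]{MR12} for dropping $p\equiv 3 \pmod 4$ and working over $\Z[p^{-1},\zeta_p]$. The ingredients match: integrality, Larsen--Wang density giving Zariski density in $\mathrm{SU}(h)$, the trace field of $\mathrm{Ad}\,\rho_p$ being $\mathbb{Q}(\zeta_p+\zeta_p^{-1})$, strong approximation, and primes $q\equiv 1\pmod p$ where the unitary group splits as $\mathrm{SL}_{d_{p,g}}(\mathbb{F}_q)$.

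One justification in Step 2 needs fixing. Larsen--Wang's analytic density concerns only the specialization where the Hermitian form is definite. It does not transfer to the other Galois conjugates, where the form is in general indefinite. Nothing more is needed, though: Zariski density is automatically transported by Galois conjugation, and the equality $\mathbb{Q}(\mathrm{tr}\,\mathrm{Ad}\,\rho_p)=\mathbb{Q}(\zeta_p+\zeta_p^{-1})$ is exactly the extra hypothesis Pink's or Weisfeiler's theorem requires. So the trace-field computation you single out as the main obstacle is indeed where the real work lies.
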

\begin{rem}
	\label{rk:rho_pq} The representations $\rho_{p,q}$ above actually depend on a choice of such a maximal ideal $J;$ but we will not mention $J$ in our notations.
\end{rem}
\begin{rem}\label{rk:strongApprox}
	The statement we gave of strong approximation is somewhat different from the one given in \cite{MR12}. Firstly, we do not assume that $p$ is $3$ modulo $4$. Secondly, our base ring is $\Z\left[p^{-1},\zeta_p\right]$ since we are not using the integral version of the TQFT (see also Remark \ref{rem:normalization}). The fact that this statement is also valid is addressed in \cite[Remarks 3.1 and 3.5]{MR12}. 
\end{rem}

\subsection{Quantum invariants and the Frohman--Kania-Bartoszy\'nska ideal}
\label{sec:FKBideals}
Let $p$ be an odd prime, and let $\zeta_p$ be a primitive $2p$-th root of unity. For a closed, oriented (not necessarily connected) $3$-manifold $M$, we denote by $RT_p(M)$ the $SO(3)$ Witten-Reshetikhin-Turaev invariant of $M$ at level $p$ and evaluated at $A=\zeta_p$. Here we use the TQFT normalization for $RT_p(M)$; in this case $RT_p(S^3)=\frac{\zeta_p^2-\zeta_p^{-2}}{\sqrt{-p}}$ which is a unit in $\Z\left[p^{-1},\zeta_p\right]$ (see for example \cite[Lemma 4.1(ii)]{GMvW}). By \cite{MR97}, $RT_p(M)\in \Z[p^{-1},\zeta_p]$. If $M$ is compact with boundary, we will write $RT_p(M)$ to denote the vector given by $M$ in $RT_p(\partial M)$, the vector space associated to $\partial M$ in the Witten-Reshetikhin-Turaev TQFT.

\begin{dfn}\label{def:VeryGood}
	We say that a compact oriented $3$-manifold $M$ is \emph{$p$-good} for some prime $p\geq 5$ if $RT_p(M)\neq 0$. We say that it is \emph{very good} if it is $p$-good for all but finitely many primes.
\end{dfn}
\begin{prop}\label{prop:VeryGood}
	A compact manifold $M$ is very good if any of the following conditions holds:
	\begin{enumerate}
		\item $M$ is a rational homology sphere;
		\item $M=M_1\# M_2$ and both $M_1,M_2$ are very good;
		\item $M$ is the double of a very good manifold;
		\item $M$ is a submanifold of a very good manifold; 
		\item $M$ is obtained from a very good manifold by attaching a connected Seifert cobordism with one input and at least one output boundary components.
	\end{enumerate}
	
	Furthermore, if $M$ is hyperbolic, for any prime $p,$ there exists a finite cover of $M$ which is $p$-good.
\end{prop}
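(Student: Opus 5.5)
Our plan is to prove each item separately, using mostly TQFT gluing axioms and known formulas for $RT_p$. For a closed manifold the statement $RT_p(M)\neq 0$ is about a scalar; for $M$ with boundary it is about a vector in $RT_p(\partial M)$.

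(1) For a rational homology sphere $M$, we would use the known result (Murakami, Masbaum--Roberts/Ohtsuki) that $RT_p(M)$ is congruent, modulo the prime $(1-\zeta_p)$ of $\Z[\zeta_p]$, to a nonzero quantity: a power of $\sqrt{-p}$-normalization times the Legendre symbol $\left(\frac{|H_1(M;\Z)|}{p}\right)$. This is nonzero as long as $p\nmid |H_1(M;\Z)|$. Thus $M$ is $p$-good for all primes not dividing $|H_1|$, which is cofinitely many.

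(2) Use $RT_p(M_1\# M_2)=RT_p(M_1)\otimes RT_p(M_2)\cdot RT_p(S^3)^{-1}$ (suitably read as vectors in $RT_p(\partial M_1)\otimes RT_p(\partial M_2)$). Since $RT_p(S^3)$ is a unit, a tensor product of nonzero vectors is nonzero.

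(3) For the double $DM=M\cup_{\partial M}\overline{M}$, the axioms give $RT_p(DM)=\langle RT_p(M),RT_p(M)\rangle$. Here the pairing is the Hermitian form, which is positive definite for a suitable Galois conjugate (unitary TQFT at $A=e^{2\pi i/4p}$-type roots). Applying a Galois automorphism shows that nonvanishing of $RT_p(M)$ implies nonvanishing of the Hermitian norm. The main subtlety is that for general $\zeta_p$ the form is only Hermitian, not definite, so one must pass to the Galois conjugate where it is definite. Galois conjugation preserves both vanishing and nonvanishing, so this suffices.

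(4) If $M\subset M'$ with $M'$ very good, write $M'=M\cup_{\partial M} C$ with $C=\overline{M'\setminus M}$. Then $RT_p(M')$ is obtained from $RT_p(M)\otimes RT_p(C)$ by contracting along the common boundary components. This map is linear, so $RT_p(M)=0$ would force $RT_p(M')=0$. Hence $M$ is $p$-good whenever $M'$ is.

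(5) Here $RT_p(M\cup W)=RT_p(W)(RT_p(M))$, viewing $W$ as a linear map $RT_p(\Sigma_{in})\to RT_p(\partial_{out})$. It suffices to show this map is injective. For a Seifert cobordism $W=\Sigma_g\times S^1$ minus solid tori (or with fibered structure), the map should be computed explicitly in the colored basis: it is diagonal, with entries products of $S$-matrix ratios $S_{0c}^{\,\chi}$-type factors, which are nonzero at roots of unity. Having at least one output component is what makes the map injective rather than a trace.

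For the final statement about hyperbolic $M$, we would use that hyperbolic $3$-manifold groups are large/virtually special. We would find a finite cover $\widetilde M$ that contains a very good submanifold in a suitable sense, or whose homology realizes a double structure. Alternatively, we would use a cover with an embedded nonseparating surface to relate $RT_p(\widetilde M)$ to a trace that is nonzero mod $p$. We expect this to be the main obstacle, together with the injectivity computation in (5).
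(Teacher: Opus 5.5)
Your items (1)--(4) follow the paper's proof: Murakami's congruence, multiplicativity under connected sum, $RT_p(DM)=\langle RT_p(M),RT_p(M)\rangle$, and linearity of gluing. Your Galois-conjugation remark in (3) makes precise a point the paper leaves implicit. There are, however, two genuine gaps.

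\textbf{Item (5).} Your claim that the diagonal entries are $S_{0c}^{\chi}$-type factors, nonzero at every root of unity, holds only when there are no exceptional fibers.

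Work in the basis that diagonalizes the map, namely solid tori whose meridian is the fiber $h$. There the product part $\Sigma_{g,m}\times S^1$ contributes $S_{0d}^{2-2g-m}$, and a change of Euler number contributes twist eigenvalues. An exceptional fiber of type $(\alpha,\beta)$, however, contributes a factor $(S^{-1}\rho_p(\gamma))_{d0}$, where $\gamma\in SL_2(\Z)$ sends the meridian to $\alpha m+\beta h$. This factor is a Gauss sum, and it can vanish.

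For example, take the base to be an annulus with one cone point of type $(p,1)$. The filling slope $pm+h$ is the image of the fiber slope $h$ under $t_m^p$, which $\rho_p$ sends to the identity projectively. So up to a scalar, the cobordism induces the same map as (pair of pants)$\times S^1$ filled along the fiber on one boundary component. That manifold is a connected sum of two solid tori, so the map has rank one. It is therefore not injective, since $\dim RT_p(T^2)=(p-1)/2\geq 2$.

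One must therefore discard the primes dividing the multiplicities and prove that these Gauss sums are nonzero for every other $p$. That is exactly the input the paper imports from \cite{DKM25}, and your sketch does not supply it.

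\textbf{Hyperbolic covers.} None of your suggestions is yet an argument. There is no mechanism making a trace ``nonzero mod $p$''. Producing a cover that \emph{contains} a very good submanifold goes the wrong way: (4) passes goodness from a manifold to its submanifolds, not conversely. Your alternative is the closest, but it needs two things it lacks: the surface must be a fiber, and you need a way to choose the cover.

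The paper's argument runs as follows. By Agol's virtual fibering theorem \cite{Agol}, $M$ has a finite cover that is a mapping torus $M_\phi$. Every $M_{\phi^k}$ is again a finite cover of $M$, with $RT_p(M_{\phi^k})=\mathrm{tr}\,\rho_p(\phi^k)$ up to a unit. In the unitary normalization (your Galois conjugation again), $\rho_p(\phi)$ lies in the compact group $\mathrm{PSU}_d$, where $d=\dim RT_p(\Sigma)$. Hence either $\rho_p(\phi)$ has finite order, or its powers accumulate at the identity. In both cases $|\mathrm{tr}\,\rho_p(\phi^k)|>d-\epsilon>0$ for some $k$, so $M_{\phi^k}$ is a $p$-good finite cover. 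This recurrence step is the missing idea.
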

\begin{proof}
	The fact that rational homology spheres are very good follows from Murakami's theorem \cite{Mur95} that $$RT_p(M)=|H_1(M;\Z)| \pmod {\zeta_p^2-1}$$ when $M$ is a rational homology sphere. The fact that for any $3$-manifolds $M_1,M_2$ one has
	$$RT_p(M_1\# M_2)=\frac{RT_p(M_1)RT_p(M_2)}{RT_p(S^3)}$$ shows that being very good is closed under connected sum; similarly the fact that $RT_p(DM)=|| RT_p(M)||^2$ (where $DM$ is the double of $M$ and $||\cdot ||$ is the norm for the natural Hermitian form on $RT_p(\partial M)$) shows closure under doubles.
	
	Given a cobordism $C$ between two (not necessarily connected) surfaces $\partial_+C$ and $\partial_-C$, we have an induced TQFT map $RT_p(C):RT_p(\partial_-C)\ra RT_p(\partial_+C)$. Let $M$ be a submanifold of a very good manifold $M'$; then there is a cobordism $C$ such that $M'= M\cup C$, which implies $RT_p(M')=RT_p(C)\cdot RT_p(M)$, and if $M'$ is very good then $M$ must also be.
	
	Furthermore, let $C$ be a Seifert manifold with $n\geq 2$ boundary components. Viewing $C$ as a cobordism 
	$$C:\partial_-C \longrightarrow \partial_+ C$$
	where $\partial_-C \simeq T^2$ and $\partial_+C\simeq \underset{1\leq i \leq n-1}{\coprod} T^2$, it was shown in \cite{DKM25} that $RT_p(C)$ is injective as a map $RT_p(\partial_-C)\ra RT_p(\partial_+ C)$ for all but finitely many primes $p$ (specifically, all primes $p$ that do not divide the orders of the exceptional fibers), proving Claim (5).
	
	Finally, if $M$ is hyperbolic, it has a finite cover $T_{\phi}$ that is the mapping torus of some $\phi\in Mod(\Sigma_g)$, by \cite{Agol}. The images of $\phi^k$ (for $k\in \Z$) under the TQFT representation $\rho_p$ form a subgroup of $PSU_{d_{p,g}}$ where $d_{p,g}=\dim RT_p(\Sigma_g)$. If this subgroup is finite, it means there is $k$ such that $RT_p(M_{\phi^k})=\mathrm{tr}\rho_p(\phi^k)=d$, and $M_{\phi^k}$ is $p$-good.
	
	 If instead this subgroup is infinite, by compactness of $\mathrm{PSU}_{d_{p,g}}$, it must have $Id_{RT_p(\Sigma)}$ as an accumulation point, which means there is $k$ big enough so that $\rho_p(\phi^k)$ is arbitrarily close to the identity; by continuity of the trace, $\lvert RT_p(M_{\phi_k})\rvert =\lvert\mathrm{tr}\rho_p(\phi^k)\rvert>d-\epsilon$, and $M_{\phi^k}$ is $p$-good.
\end{proof}

\begin{rem}\label{rk:allVeryGood}
The authors conjecture that every $3$-manifold is very good. In fact, we note that the Chen-Yang volume conjecture \cite{CY18} implies that every $3$-manifold is very good.
\end{rem} 

\begin{dfn}\label{def:FKBideals}\cite{FKB}
	Let $N$ be a compact oriented $3$-manifold. The \emph{$p$-th Frohman--Kania-Bartoszy\'nska ideal of $N$}, denoted with $I_p(N)$, is the ideal of $\Z[p^{-1},\zeta_p]$ generated by $RT_p(M)$ for all $M$ closed connected oriented $3$-manifold containing $N$.
\end{dfn}

It is clear from the definition that if $N\subseteq N'$, necessarily $I_p(N')\subseteq I_p(N)$, since every closed manifold containing $N'$ must also contain $N$. This, coupled with the fact that for a closed manifold $M$, $I_p(M)$ is generated by $RT_p(M)$, can be used to provide obstructions to embeddability. In particular, since $I_p(S^3)=\Z[p^{-1},\zeta_p]$, if $N$ is such that $I_p(N)$ is a proper ideal, then $N$ cannot embed in $S^3$.

\begin{rem}\label{rem:normalization}
	The original definition of $I_p$ in \cite{FKB} was given using the integral normalization of $RT_p$; in this setting, $RT_p(S^3)=1$, $RT_p(M)\in \Z\left[\zeta_p\right]$ and thus $I_p(M)$ would be an ideal in $\Z[\zeta_p]$. We use this (a priori) slightly weaker version because it makes it less cumbersome to apply the TQFT constructions we will need to prove our results.
\end{rem}

\subsection{Dijkgraaf-Witten TQFT}
\label{sec:DW-TQFT}
Through the Turaev-Viro construction, any spherical fusion category gives rise to a TQFT; for an introduction to the topic, see \cite{Turaev}. When applied to the category of representations of a finite group $G$, this construction gives rise to the \emph{untwisted Dijkgraaf-Witten TQFT with gauge group $G$} \cite{DW90} \cite{FQ93}. These can be considered the simplest examples of TQFTs, and the following nice properties (plus the basic properties of TQFTs) are all we will need to know about it.

\begin{itemize}
	\item For a closed oriented $3$-manifold $M$, $Z_G(M)=\frac{1}{\lvert G\rvert}\lvert \mathrm{Hom}(\pi_1(M),G)\rvert$;
	\item For a closed oriented surface $\Sigma$, $Z_G(\Sigma)=\C[\mathcal{X}_G(\Sigma)]$, where $\mathcal{X}_G$ is the set of conjugacy classes of maps from $\pi_1(\Sigma)$ to $G$;
	\item the action of $\mathrm{Mod}(\Sigma)$ on $Z_G(\Sigma)$ is induced by its action on $\pi_1(\Sigma)$.
\end{itemize}

\begin{lem}\cite[Lemma 7.10]{Fun13}\label{lem:Funar}
	If $M_1$ and $M_2$ are oriented closed $3$-manifolds such that $Z_G(M_1)=Z_G(M_2)$ for all finite groups $G$, then $\pi_1(M_1)$ and $\pi_1(M_2)$ have isomorphic profinite completions.
\end{lem}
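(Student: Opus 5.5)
The plan is to go from the equality of Dijkgraaf--Witten invariants to equality of finite quotients by counting, and then to use the standard fact that finitely generated groups with the same finite quotients have isomorphic profinite completions. Write $\Gamma_i=\pi_1(M_i)$, which is finitely generated since $M_i$ is a closed $3$-manifold. By the first listed property of $Z_G$, the hypothesis says exactly that
\[|\mathrm{Hom}(\Gamma_1,G)|=|\mathrm{Hom}(\Gamma_2,G)|\]
for every finite group $G$. Since $\Gamma_i$ is finitely generated, these numbers are finite.

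\textbf{Step 1: from homomorphisms to epimorphisms.} We first show that $|\mathrm{Epi}(\Gamma_1,G)|=|\mathrm{Epi}(\Gamma_2,G)|$ for every finite $G$. Every homomorphism $\Gamma\to G$ surjects onto a unique subgroup $H\le G$, so
\[|\mathrm{Hom}(\Gamma,G)|=\sum_{H\le G}|\mathrm{Epi}(\Gamma,H)|.\]
We then induct on $|G|$. The terms with $H$ a proper subgroup agree for $\Gamma_1$ and $\Gamma_2$ by the inductive hypothesis, and the left-hand sides agree by assumption, so the terms with $H=G$ agree as well. This is a M\"obius inversion on the subgroup lattice, and it works because all the sums are finite.

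\textbf{Step 2: same finite quotients.} The normal subgroups $K\triangleleft\Gamma$ with $\Gamma/K\cong G$ correspond bijectively to $\mathrm{Epi}(\Gamma,G)/\mathrm{Aut}(G)$. The action of $\mathrm{Aut}(G)$ on epimorphisms by post-composition is free. Hence the number of such $K$ equals $|\mathrm{Epi}(\Gamma,G)|/|\mathrm{Aut}(G)|$, which is the same for $\Gamma_1$ and $\Gamma_2$. In particular, $\Gamma_1$ and $\Gamma_2$ have the same finite quotients, each occurring the same number of times.

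\textbf{Step 3: conclusion.} We invoke the classical theorem (Dixon--Formanek--Poland--Ribes) that two finitely generated residually-anything groups with the same set of isomorphism classes of finite quotients have isomorphic profinite completions. The introduction already records this fact as "Two finitely generated groups have isomorphic profinite completions if and only if they have the same set of finite quotients."

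If one wanted to avoid citing this theorem, it would be the main obstacle. The argument would go through a compactness step. For each $n$, let $\Gamma_{(n)}$ be the intersection of all normal subgroups of index at most $n$; it has finite index because $\Gamma$ is finitely generated. The finite characteristic quotients $\Gamma_i/(\Gamma_i)_{(n)}$ are determined by the finite quotients, and one gets surjections $\Gamma_1/(\Gamma_1)_{(n)}\to\Gamma_2/(\Gamma_2)_{(n)}$ and conversely. These surjections are then isomorphisms by finiteness. A K\"onig's lemma argument over the finite, nonempty sets of such compatible isomorphisms then produces an isomorphism of the inverse limits, that is, $\widehat{\Gamma_1}\cong\widehat{\Gamma_2}$.
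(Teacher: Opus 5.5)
Your proof is correct and follows the standard argument behind the cited \cite[Lemma 7.10]{Fun13}, which the paper quotes without proof. The counts $|\mathrm{Hom}(\Gamma,G)|$ determine $|\mathrm{Epi}(\Gamma,G)|$ by M\"obius inversion over subgroups with induction on $|G|$, which is also why the paper's order-bounded variant goes through verbatim. Dixon--Formanek--Poland--Ribes then applies to the finitely generated groups $\pi_1(M_i)$; that theorem needs no residual finiteness hypothesis, and Step 2 is more than necessary, since $\mathrm{Epi}(\Gamma,G)\neq\emptyset$ already says $G$ is a quotient of $\Gamma$.
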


The Dijkgraaf-Witten TQFT provides representations of $\mathrm{Mod}(\Sigma)$ into any finite group $G$; we denote these by $\rho_G$. Clearly $\ker \rho_G$ is a finite index normal subgroup of $\mathrm{Mod}(\Sigma)$.

\begin{lem}\label{lemma:Gequiv-DWinvariants}
	Let $M_1$ and $M_2$ be two closed compact oriented $3$-manifolds. If $M_1\underset{G}{\sim}M_2$, then $Z_G(M_1)=Z_G(M_2)$.
\end{lem}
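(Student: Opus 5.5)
It suffices to treat a single Torelli twist, because $Z_G$-equality is transitive along a chain of twists. Here $M_1\underset{G}{\sim}M_2$ means $\mathcal{H}$-equivalence with $H_g=\ker\rho_{g,G}$. So assume $M_2=(M_1\setminus U(\Sigma))\cup_{\widetilde{\phi}}U(\Sigma)$, where $\phi\in\ker\rho_{G,\Sigma}$.

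First consider the case where $\Sigma$ is closed. We regard $M_1$ as obtained by gluing two cobordisms along the two copies of $\Sigma$ bounding $U(\Sigma)\simeq\Sigma\times[0,1]$. Write $X=M_1\setminus U(\Sigma)$; this is a cobordism from $\Sigma\sqcup\overline{\Sigma}$ (or from the two boundary copies) to $\emptyset$. Then
\[M_1=X\cup(\Sigma\times[0,1]), \qquad M_2=X\cup T_\phi,\]
where $T_\phi$ is the mapping cylinder of $\phi$. By functoriality (the gluing axiom),
\[Z_G(M_1)=Z_G(X)\bigl(Z_G(\Sigma\times[0,1])\bigr), \qquad Z_G(M_2)=Z_G(X)\bigl(\rho_G(\phi)\bigr).\]
More precisely, $Z_G(X)$ is a linear form on $Z_G(\Sigma)\otimes Z_G(\Sigma)^*\cong\mathrm{End}(Z_G(\Sigma))$, and it is evaluated on the identity in the first case and on $\rho_G(\phi)$ in the second. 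Since $\rho_G(\phi)=\mathrm{Id}$, the two values agree. Note that $\rho_G$ here is a genuine linear representation, not a projective one, since Dijkgraaf--Witten theory has no framing anomaly. So no scalar ambiguity appears; this is the point to check carefully.

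If $\Sigma$ is disconnected, I would apply the same argument with $\rho_G$ of the product. Presumably, though, the twists are along connected surfaces $\Sigma_{g_i}$.

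The main obstacle is the case where $\Sigma$ has one boundary component, since the surface is properly embedded and so $X$ has a boundary with corners. I would handle it by capping. Extend $\phi$ by the identity over a disk to get $\hat\phi\in\mathrm{Mod}(\hat\Sigma)$, where $\hat\Sigma$ is the closed surface. Then realize the twist inside a closed surface: take a closed surface $\hat\Sigma\subset M_1$ obtained by pushing $\Sigma\cup$(an annulus or disk-like piece of $\partial U$) into the interior, namely the boundary of a regular neighborhood of $\Sigma$ minus a collar of its boundary. Twisting along this surface by a map supported on a copy of $\Sigma$ yields the same manifold $M_2$.

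The remaining issue is that the map should lie in $\ker\rho_G$ of the closed surface. The group $\ker\rho_G$ is characterized by the action on conjugacy classes of maps $\pi_1\to G$, and an element of $\ker\rho_{G,\Sigma}$ for the bordered surface (interpreted via $\pi_1(\Sigma)$ free, as in the definition used for $\mathcal{H}_g$ with closed $\Sigma_g$) induces the trivial action on $\mathcal{X}_G$ of the capped surface. This holds because $\pi_1$ of the capped surface is a quotient of $\pi_1(\Sigma)$ compatibly with $\phi$. With this in place, the closed case applies and gives $Z_G(M_1)=Z_G(M_2)$.

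Alternatively, one can bypass the TQFT formalism entirely. Count $\mathrm{Hom}(\pi_1(M_i),G)$ by van Kampen: a homomorphism is a compatible pair consisting of a representation on the complement and one on $\Sigma$, and $\phi$ acting trivially on conjugacy classes gives a bijection between the two sets of such pairs. I would keep this approach as a backup check.
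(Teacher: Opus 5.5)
Your closed-surface argument is correct and is the paper's proof. The paper treats the separating case via $\langle Z_G(H_1),Z_G(H_2)\rangle=\langle Z_G(H_1),\rho_G(\phi)Z_G(H_2)\rangle$ and the non-separating case via $\mathrm{Tr}(Z_G(C))=\mathrm{Tr}(\rho_G(\phi)Z_G(C))$, and these are exactly the two instances of your linear form on $\mathrm{End}(Z_G(\Sigma))$ evaluated at $\rho_G(\phi)=\mathrm{Id}$.

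The bordered case you call the main obstacle cannot occur, since a properly embedded surface in a closed manifold is closed (and $\mathcal{H}$-equivalence uses closed $\Sigma_g$ anyway). You should therefore drop the capping paragraph, which is also flawed as written. The closed surface you actually construct is the genus-$2g$ double of $\Sigma$, not the capped surface. Moreover, acting trivially on conjugacy classes of representations of the free group $\pi_1(\Sigma)$ does not pass to the double: the boundary-parallel twist is inner on $\pi_1(\Sigma)$ but becomes a separating twist on the double, and it already acts nontrivially for $G=S_3$.
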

\begin{proof}
	Suppose $M_2$ is obtained from $M_1$ by a Torelli twist along the closed surface $\Sigma$ by the map $\phi\in \ker \rho_G$. Suppose first that $\Sigma$ is separating; call $H_1$ and $H_2$ the two components of $M\setminus U(\Sigma)$, where $U(\Sigma)$ is an open tubular neighborhood of $\Sigma$. Then by the TQFT properties:
	$$Z_G(M_1)=\langle Z_G(H_1),Z_G(H_2)\rangle=\langle Z_G(H_1),\rho_G(\phi) Z_G(H_2)\rangle=Z_G(M_2).$$
	If $\Sigma$ is instead non-separating, then viewing $C=M_1\setminus \Sigma$ as a cobordism $\Sigma\longrightarrow \Sigma,$ we have
	$$Z_G(M_1)=\mathrm{Tr}(Z_G(C))=\mathrm{Tr}(\rho_G(\phi)Z_G(C))=Z_G(M_2).$$
\end{proof}

The following proposition relates the $T_n$-equivalence to $\mathrm{Ker} \rho_G$ equivalence for a finite group $G.$ Recall that the exponent $e(G)$ of a finite group $G$ is the least integer $d$ such that $g^d=e_G$ for all $g\in G.$

\begin{lem}
	Let $G$ be a finite group, let $n$ be a multiple of $e(G)$ and let $M$ and $M'$ be two closed compact oriented $3$-manifolds. Then 
	$$M\underset{T_n}{\sim}M'\Longrightarrow M\underset{G}{\sim}M'.$$
\end{lem}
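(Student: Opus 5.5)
It suffices to treat a single Torelli twist, since both relations are defined through sequences of Torelli twists. So we may assume $M'$ is obtained from $M$ by one Torelli twist along a surface $\Sigma$ with gluing map $\phi\in T_n(\Sigma)$. We then need to show that $\phi\in \mathrm{Ker}\,\rho_{G,\Sigma}$. Since $T_n(\Sigma)$ is generated by the $n$-th powers $\tau_\gamma^n$ of Dehn twists and $\mathrm{Ker}\,\rho_G$ is a subgroup, it is enough to check that $\rho_G(\tau_\gamma^n)=\mathrm{Id}$ for every simple closed curve $\gamma\subset\Sigma$.

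By the description of the Dijkgraaf-Witten TQFT recalled above, $Z_G(\Sigma)=\C[\mathcal{X}_G(\Sigma)]$, and $\mathrm{Mod}(\Sigma)$ acts by permuting the basis of conjugacy classes $[\alpha]$ of homomorphisms $\alpha:\pi_1(\Sigma)\ra G$ via precomposition. So the key step is to show that $\alpha\circ(\tau_\gamma^n)_*$ is conjugate to $\alpha$, indeed equal to $\alpha$, for every such $\alpha$. For this we use the standard formula for the action of a Dehn twist on $\pi_1$. First consider $\gamma$ separating, or $\gamma$ non-separating after a suitable choice of generators. In both cases one can pick a generating set of $\pi_1(\Sigma)$ (with basepoint off $\gamma$) such that each generator $x$ is sent by $(\tau_\gamma)_*$ to either $x$ or $x c$ (or $cx$), or a conjugate $c^{\pm1}xc^{\mp1}$, where $c$ is a fixed loop representing $\gamma$. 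For the generators crossing $\gamma$ once (non-separating case) we get $(\tau_\gamma^n)_*(x)=xc^n$, and for those conjugated (separating case) we get $c^n x c^{-n}$. Applying $\alpha$ gives $\alpha(x)\alpha(c)^n=\alpha(x)$ or $\alpha(c)^n\alpha(x)\alpha(c)^{-n}=\alpha(x)$, because $e(G)\mid n$ forces $\alpha(c)^n=e_G$. Hence $\alpha\circ(\tau_\gamma^n)_*=\alpha$, so $\rho_G(\tau_\gamma^n)$ fixes every basis vector and is the identity. (If $\Sigma$ has one boundary component, the same argument applies to $\pi_1$ of the surface, or one notes that the Torelli twist along a surface with boundary can be replaced by a twist along a closed surface; the paper's Lemma~\ref{lemma:Gequiv-DWinvariants} only deals with the closed setting anyway.)

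The main obstacle is making the $\pi_1$ computation clean. The issue is the basepoint and the well-definedness of the action up to conjugacy: mapping classes act on $\pi_1$ only up to inner automorphisms. This is harmless here, since $\mathcal{X}_G$ consists of conjugacy classes. To avoid writing explicit generators, I would argue via van Kampen. Cut $\Sigma$ along $\gamma$ and write $\pi_1(\Sigma)$ as an amalgam $A*_{\langle c\rangle}B$ (separating case) or an HNN extension $A*_{\langle c\rangle}$ with stable letter $t$ (non-separating case). Then $\tau_\gamma^n$ acts as the identity on $A$ and as conjugation by $c^n$ on $B$, or sends $t\mapsto tc^n$. In either case $\alpha\circ(\tau_\gamma^n)_*=\alpha$ follows at once from $\alpha(c)^n=e_G$. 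This concludes $T_n(\Sigma)\subseteq \mathrm{Ker}\,\rho_{G,\Sigma}$ and therefore the implication.
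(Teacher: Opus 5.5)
Your proposal is correct and follows the paper's proof. Both arguments reduce to $T_n(\Sigma)\subseteq \mathrm{Ker}\,\rho_{G,\Sigma}$, then observe that $\tau_c^n$ only inserts powers $c^{\pm n}$ into loops, and these die under any homomorphism to $G$ since $e(G)\mid n$. The paper does the $\pi_1$ computation with the basepoint on $c$, writing an arbitrary loop as $\gamma_1\gamma_2\cdots\gamma_d\mapsto \gamma_1c^{\pm n}\gamma_2\cdots\gamma_d$, whereas you use the amalgam/HNN description; this is only a bookkeeping difference.
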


\begin{proof}
	The TQFT axioms for the Dijkgraaf-Witten TQFT of gauge group $G$ imply that it suffices to show that $T_n(\Sigma)\subset \mathrm{Ker}\rho_{G,\Sigma}$ for any surface $\Sigma.$ Let $\gamma\in \pi_1(\Sigma),$ and let $t_c$ be the Dehn twist along a simple closed curve $c\subset \Sigma.$ We choose the base point for $\Sigma$ on $c,$ and isotope $\gamma$ so that it intersects $c$ transversely and only in this base point (but perhaps several times). In other words, we can write $\gamma=\gamma_1\gamma_2\ldots \gamma_d,$ and we have $t_c^n(\gamma)=\gamma_1 c^{\pm n} \gamma_2 c^{\pm n} \ldots \gamma_d.$ It is clear that any representation $\pi_1(\Sigma)\longrightarrow G$ sends $\gamma$ and $t_c^n(\gamma)$ to the same element. Hence $t_c^n \in \mathrm{Ker} \rho_G$ and since the latter is normal in $\mathrm{Mod}(\Sigma),$ we have $T_n(\Sigma)\subset \mathrm{Ker}\rho_{G,\Sigma}.$
\end{proof}

Finally we write down a slight variation of Lemma \ref{lem:Funar}, whose proof is essentially word-for-word the same as its original proof.

\begin{lem}
	Suppose $M_1$ and $M_2$ are two manifolds such that $Z_G(M_1)=Z_G(M_2)$ for all finite groups $G$ of order less than or equal to $k$. Then $\pi_1(M_1)$ and $\pi_1(M_2)$ have the same quotients of order at most $k$.
\end{lem}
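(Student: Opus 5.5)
The goal is to show that equal $Z_G$ for all groups of order at most $k$ forces $\pi_1(M_1)$ and $\pi_1(M_2)$ to have the same quotients of order at most $k$. The plan is to reconstruct, for each finite group $Q$ with $|Q|\leq k$, the number of surjections $\pi_1(M_i)\to Q$ from the numbers $|\mathrm{Hom}(\pi_1(M_i),H)|$ with $H$ running over subgroups of $Q$. Every such $H$ has order at most $k$. By the first listed property of the Dijkgraaf-Witten TQFT, $|\mathrm{Hom}(\pi_1(M_i),H)|=|H|\,Z_H(M_i)$, so the hypothesis gives
$$|\mathrm{Hom}(\pi_1(M_1),H)|=|\mathrm{Hom}(\pi_1(M_2),H)|$$
for every finite group $H$ of order at most $k$.

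Write $\pi=\pi_1(M_i)$ and $\mathrm{Epi}(\pi,H)$ for the set of surjections. Every homomorphism $\pi\to Q$ is a surjection onto its image, which is a unique subgroup $H\leq Q$. Hence
$$|\mathrm{Hom}(\pi,Q)|=\sum_{H\leq Q}|\mathrm{Epi}(\pi,H)|.$$
We argue by induction on $|Q|$, or equivalently by Möbius inversion on the subgroup lattice of $Q$. This gives
$$|\mathrm{Epi}(\pi,Q)|=|\mathrm{Hom}(\pi,Q)|-\sum_{H<Q}|\mathrm{Epi}(\pi,H)|,$$
and every term on the right involves only groups of order at most $|Q|\leq k$. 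By induction, each term takes the same value for $\pi_1(M_1)$ and $\pi_1(M_2)$, so $|\mathrm{Epi}(\pi_1(M_1),Q)|=|\mathrm{Epi}(\pi_1(M_2),Q)|$. The base case is the trivial group, which has exactly one epimorphism from any group.

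Next we pass to quotients. The quotients of $\pi$ isomorphic to $Q$ are the normal subgroups $K$ with $\pi/K\cong Q$. Surjections $\pi\to Q$ with kernel $K$ are in bijection with isomorphisms $\pi/K\to Q$, so their number is $|\mathrm{Aut}(Q)|$. Therefore the number of normal subgroups $K$ with $\pi/K\cong Q$ equals $|\mathrm{Epi}(\pi,Q)|/|\mathrm{Aut}(Q)|$. This number is finite because $\pi_1$ of a compact manifold is finitely generated, and by the previous step it agrees for $M_1$ and $M_2$.

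In particular, $Q$ is a quotient of $\pi_1(M_1)$ if and only if it is a quotient of $\pi_1(M_2)$, and even the multiplicities agree. I expect the only point needing care to be that the induction stays within groups of order at most $k$. This is automatic, since subgroups of $Q$ have order at most $|Q|$, so I foresee no real obstacle.
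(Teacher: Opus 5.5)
Your proof is correct and is essentially the argument the paper has in mind. The paper only says its proof is word-for-word that of Funar's Lemma 7.10, which likewise recovers $|\mathrm{Epi}(\pi_1(M_i),Q)|$ from the counts $|\mathrm{Hom}(\pi_1(M_i),H)|=|H|\,Z_H(M_i)$ by induction (Möbius inversion) over the subgroup lattice, here restricted to groups of order at most $k$; your extra division by $|\mathrm{Aut}(Q)|$ to match multiplicities of normal subgroups is a harmless refinement.
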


Combining everything we have seen so far, we immediately obtain the following.

\begin{cor}\label{cor:convergence}
	Take $\mathcal{G}_k$ the set of all groups (up to isomorphism) of order at most $k$. Suppose  we have a sequence $M_k\sim_{\mathcal{G}_k} M$, for $k\in \N$; then $\pi_1(M_k)$ profinitely converges to $\pi_1(M)$.
\end{cor}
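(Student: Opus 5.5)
The plan is to chain the three preceding lemmas, choosing at each step of the equivalence the finite-index subgroup that makes the Dijkgraaf--Witten invariants agree for every small group at once.

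\textbf{Step 1: Unpack the hypothesis.} The relation $M_k\sim_{\mathcal{G}_k} M$ means, via Definition \ref{def:Hequiv} with
$$H_g=\bigcap_{G\in\mathcal{G}_k}\mathrm{Ker}\,\rho_{g,G},$$
that $M_k$ is obtained from $M$ by a finite sequence of Torelli twists. Each gluing map $\phi_i$ lies in $\mathrm{Ker}\,\rho_{G}$ for every group $G$ of order at most $k$. Note that $\mathcal{G}_k$ is finite, so $H_g$ is a finite intersection of finite index normal subgroups and is again of this type.

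\textbf{Step 2: Apply Lemma \ref{lemma:Gequiv-DWinvariants} along the chain.} Fix $G\in\mathcal{G}_k$. Each single twist has gluing map in $\mathrm{Ker}\,\rho_G$, so that lemma shows $Z_G$ is unchanged by the twist, whether the surface is separating or non-separating. Inducting along the sequence of twists gives $Z_G(M_k)=Z_G(M)$ for all groups $G$ of order at most $k$.

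One point needs checking here. Lemma \ref{lemma:Gequiv-DWinvariants} is stated for twists along closed surfaces, while Torelli twists may use surfaces with one boundary component. I would handle this by extending $\phi$ by the identity to the closed surface obtained by capping off. This requires that the capped extension still lie in $\mathrm{Ker}\,\rho_G$. Alternatively, one can interpret the kernel condition for the bordered surface directly through its action on $\pi_1$, since a map acting trivially on representations of $\pi_1$ of the bordered surface acts trivially after capping. I expect this to be the only mildly delicate point.

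\textbf{Step 3: Conclude with the finite-quotient version of Lemma \ref{lem:Funar}.} The final lemma before the corollary then says that $\pi_1(M_k)$ and $\pi_1(M)$ have the same quotients of order at most $k$. Taking $d_k=k\to\infty$ in the definition of profinite convergence completes the proof.
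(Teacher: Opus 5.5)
Your proposal is correct and matches the paper's argument, which obtains the corollary by applying Lemma \ref{lemma:Gequiv-DWinvariants} twist by twist for every $G$ of order at most $k$, then the finite-quotient variant of Lemma \ref{lem:Funar}, with $d_k=k$. The ``delicate point'' in your Step 2 does not arise: Definition \ref{def:Hequiv} already twists along closed surfaces $\Sigma_{g_i}$, and the lemmas concern closed $3$-manifolds, where every properly embedded surface is closed (capping off a surface whose boundary lies on $\partial M$ would not be a move inside $M$ anyway).
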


\section{Properties of the $(Y_k,T_n)$-equivalence}
\label{sec:equivRelProp}

This short section establishes two further properties of the $(Y_k,T_n)$-equivalence. The first one, which will be used in the proof of Theorem \ref{thm:main} is the following:
\begin{prop}
	\label{prop:hyperbolic} Let $M$ be a compact oriented $3$-manifold, let $v>0$ and $g\geq 1$. Then for any $k,n\geq 1,$  there exists a hyperbolic $3$-manifold $M'$ such that $M\underset{(Y_k,T_n)}{\sim} M',$ and such that the volume of $M'$ is greater than $v$ and $M'$ contains no essential, non-boundary parallel surfaces of genus less than $g$.
\end{prop}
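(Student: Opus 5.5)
We will realize $M'$ as a single Torelli twist along a Heegaard surface, with gluing map a pseudo-Anosov element of $\Gamma_k I(\Sigma)\cap T_n(\Sigma)$ raised to a large power. The hyperbolicity, volume and surface conditions will then follow from the now-standard theory of random or high-distance Heegaard splittings.

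\emph{Step 1: the twisting subgroup.} Choose a Heegaard splitting $M=C\cup_\phi H$ with $C$ a compression body and $H$ a handlebody. Stabilize it so that the Heegaard surface $\Sigma$ has genus $h\geq 2$ (and large if convenient). Set
$$K=\Gamma_k I(\Sigma)\cap T_n(\Sigma).$$
This is a normal subgroup of $\mathrm{Mod}(\Sigma)$, because both $\Gamma_k I(\Sigma)$ and $T_n(\Sigma)$ are normal. We claim $K$ is nontrivial and not virtually abelian. Indeed, if $a,b$ are $n$-th powers of Dehn twists along separating curves, then $a,b\in I(\Sigma)\cap T_n(\Sigma)$. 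Iterated commutators $[\ldots[[a,b],b']\ldots]$ of such elements lie in $\Gamma_k I(\Sigma)\cap T_n(\Sigma)$, and by standard ping-pong arguments these can be made nontrivial and free-generating. A nontrivial normal subgroup of $\mathrm{Mod}(\Sigma)$ that is not reducible contains pseudo-Anosov elements (Ivanov). Moreover, its limit set in $\mathcal{PMF}(\Sigma)$ is the whole sphere, since it is normal. Hence $K$ contains pseudo-Anosovs $f$ whose stable and unstable laminations can be chosen generic, in particular avoiding the disk sets of $C$ and $H$.

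\emph{Step 2: high Hempel distance.} Take $M'_m=C\cup_{\phi\circ f^m}H$. This is the Torelli twist of $M$ along $\Sigma$ by $f^m$, so $M\underset{(Y_k,T_n)}{\sim}M'_m$. By Hempel's argument (or Abrams--Schleimer), if the stable and unstable laminations of $f$ are not limits of the disk sets, then the Hempel distance of the splitting $C\cup_{\phi\circ f^m}H$ tends to infinity linearly in $m$. Once the distance exceeds $2$, the manifold is irreducible, atoroidal and not Seifert fibered. Geometrization then gives hyperbolicity of the interior. Boundary components of genus $\geq 2$ are handled by the usual convention of hyperbolic with totally geodesic boundary, or by noting that the statement only asks for hyperbolic.

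For the surface condition we use Hartshorn's theorem and its bounded-surface version (Scharlemann). An essential surface of genus $<g$ forces the distance to be at most $2g$. So for large $m$, $M'_m$ contains no essential non-boundary-parallel surface of genus $<g$.

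\emph{Step 3: volume.} By the work of Maher/Namazi--Souto/Brock--Minsky--Namazi--Souto, for bounded genus, volume grows at least linearly with Heegaard distance. A more elementary route is to use the fact that the volume of $C\cup_{\phi\circ f^m}H$ grows linearly in $m$ times the translation length of $f$, by model-manifold results. Taking $m$ large gives volume $>v$.

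\emph{Main obstacle.} The delicate point is Step 1: producing a pseudo-Anosov in $K$ whose laminations avoid the closures of the disk sets of both $C$ and $H$. We plan to argue via normality. Since $K$ is normal and infinite, its limit set is all of $\mathcal{PMF}$, and pseudo-Anosov fixed-point pairs of $K$ are dense in $\mathcal{PMF}\times\mathcal{PMF}$. The limit sets of the disk sets are nowhere dense, by Masur's result for handlebodies, extended to compression bodies. So a suitable conjugate of a pseudo-Anosov in $K$ works.
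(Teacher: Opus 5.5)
Your proposal follows essentially the same route as the paper. You reglue a Heegaard splitting by high powers of a pseudo-Anosov in the normal subgroup $\Gamma_k I(\Sigma)\cap T_n(\Sigma)$ whose laminations avoid the closures of the disk sets; the paper gets this from Long, Masur and Maher, which amounts to your normality/density argument. The rest also matches: Hempel and Abrams--Schleimer for growing distance, Scharlemann's bound plus geometrization for hyperbolicity and the surface condition, and Brock--Minsky--Namazi--Souto model manifolds for volume.

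The one place the paper does more work is the volume step, which it obtains via your second route. It checks that the gluings $\phi^m$ have uniformly bounded combinatorics. When $\partial M\neq\emptyset$, it also doubles $C$ along its higher-genus lower boundary and uses Mostow rigidity. You should drop your general claim that volume grows linearly with Hempel distance in bounded genus: the references you cite do not establish it, and you do not need it.
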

The proof of Proposition \ref{prop:hyperbolic} uses techniques that are standard but quite different from the rest of the paper; the volume condition in particular requires a very technical proof, and was explained to us by Gabriele Viaggi.
\begin{proof}
	Consider a Heegaard splitting $M=C \underset{\Sigma}{\cup} H$; denote by $V$ the set of curves in $\Sigma$ bounding disks in $H$.
	
	Note that the subgroup
	$$N=\Gamma_k I(\Sigma)\cap T_n(\Sigma)$$
	of $\mathrm{Mod}(\Sigma)$ is a non-trivial non-central normal subgroup of $\mathrm{Mod}(\Sigma_g)$, as follows for instance from Proposition \ref{prop:densityGammakI}. 
	
	 By \cite{Hempel}, the Hempel distance of the Heegaard splitting 
	$$M_m=C\underset{\phi^m}{\cup} H$$
	is unbounded (indeed, it grows linearly by \cite{AS}), provided that $\phi$ satisfies a genericity assumption: the stable and unstable laminations of $\phi$ must avoid the closure of $V$ in the set of projective measured laminations. By \cite[Lemma 2.6]{Long86}, \cite[Theorem 1.2]{Mas86} and \cite[Lemma 3.4]{Mah}, we can always find $\phi\in N$ that satisfies this genericity assumption.
	
	 Therefore, for each $D$, we can find $m$ such that the Hempel distance of the splitting given by $\phi^m$ is bigger than $D$. We show that for any $v,g$ there is an $m$ such that $M_m$ satisfies the requirements of this proposition.
	
	First of all, by \cite{Sch}, if the Hempel distance of a splitting of $M_m$ is $D$, then $M_m$ cannot contain any incompressible, non-boundary parallel surface of genus less than $D/2$. This implies that if $D$ is big enough, there are no essential tori, annuli, spheres or disks, which by geometrization implies that $M_m$ is hyperbolic.
	
	Secondly, by \cite[Theorem 8.1]{BMNS}, the volume of $M_m$ can be arbitrarily high. To see this, take two copies of $H$ and $\widetilde{C}$ the double of $C$ along the components of its lower boundary of genus higher than $1$; $\widetilde{C}$ is irreducible and atoroidal, since $C$ itself is atoroidal, irreducible and does not contain any essential annuli or disks whose boundary lies entirely in its lower boundary. We refer the interested reader to \cite{BMNS} for details and notation. We fix a marking $V$ on $\partial H$ given by a disk system, and a marking $V'$ on the upper boundary of $C$ given by the marking of $H$ through the natural identification coming from the Heegaard splitting of $M$. We need to check that there is an $R$ such that the gluing given by $\phi^m$ has $R$-bounded combinatorics for all $m$. We first fix a marking $\mu$ on $\partial_+C$ corresponding to a hyperbolic surface belonging to the axis of $\phi$; the fact that there is an $R$ such that $\mu$ and $\phi^m\mu$ have $R$-bounded combinatorics for all $m$ follows from the arguments of \cite[Section 2.14]{BMNS}. To check that the gluing itself has $R$-bounded combinatorics, we need to check that for any subsurface $Y$, the distance $d_Y(V,\phi^m V')$ is bounded by some $R_0$ independent of $Y$ and $m$. We have that $$d_Y(V,\phi^m V')\leq d_Y(V,\mu)+d_Y(\mu,\phi^m\mu)+d_Y(\phi^m\mu,\phi^m V')$$ and \begin{itemize}
		\item $d_Y(V,\mu)$ is bounded by some constant due to \cite[Theorem 6.12]{MM2};
		\item the middle term is bounded by $R$ because $\mu,\phi^m \mu$ has $R$-bounded combinatorics;
		\item $d_Y(\phi^m\mu,\phi^m V')$ is equal to $d_Y(\mu,V')$ because $\phi^m$ acts as an isometry and thus it is again bounded by some constant due to \cite[Theorem 6.12]{MM2}.
	\end{itemize}
	
	Furthermore the Hempel distance of the gluing $\phi^m$ grows linearly (this is referred to as the height in \cite{BMNS}). Then for $m$ big enough $\phi^m$ satisfies the hypotheses of \cite[Theorem 8.1]{BMNS}, which means that we can glue $\widetilde{C}$ with the two copies of $H$ using $\phi^m$ to obtain a hyperbolic manifold without boundary homeomorphic to $M_m$ doubled along the boundary components of genus higher than $1$, denoted with $DM_m$. By Mostow rigidity, the volume of $DM_m$ must be twice the volume of $M_m$. Moreover, there is a $K$ such that $DM_m$ is $K$-bilipschitz to a given ``model manifold'' built from 5 pieces equipped with a geometric structure: the two copies of $H$ glued to $\widetilde{C}$ via two $I$-bundles. The geometric structure on the $I$-bundles depends on $m$, and their volumes grow at least linearly in $D$, while those of the remaining pieces do not. This implies that the volume of $DM_m$ grows at least linearly in $m$; the same must be true for $M_m$ as well.

	We can thus conclude that for all sufficiently large $m$, the manifolds $M_m$ are $(Y_k,T_n)$-equivalent to $M$, are hyperbolic, have volume larger than a given $v$, and contain no essential, non-boundary parallel surfaces of genus less than $g$.

\end{proof}

\begin{rem}
	Using the same techniques, we can show that for any $M$, there is a hyperbolic $M'$ that is $(Y_k,T_n,\mathcal{H})$-equivalent to $M$ for any collection of finite index, normal subgroups as in Definition \ref{def:Hequiv}. Indeed, because the groups in $\mathcal{H}$ are finite index, for any $\phi$ there must be an $m_0$ such that $\phi^{m_0}\in \mathcal{H}$; then following the notations of the proof of Proposition \ref{prop:hyperbolic}, the $3$-manifold $M_{km_0}$  for $k$ big enough will work.
\end{rem}

Our second property will not be needed for the proof of our main theorems, but gives a justification why $3$-manifolds that are $T_{n!}$-equivalent for large $n$ may be considered close.
\begin{lem}\label{lem:Tnequiv}
	Suppose $M$ and $M'$ are two compact oriented $3$-manifolds that are $T_n$ equivalent for some $n$, and suppose that $Z$ is a TQFT such that $Z(\tau)^n=I$ for every Dehn twist $\tau$. Then $Z(M)=Z(M')$. In particular, if $M$ and $M'$ are $T_p$-equivalent for a prime $p$, $I_p(M)=I_p(M')$.
\end{lem}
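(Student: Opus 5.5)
The argument reduces to a single Torelli twist along a surface $\Sigma$ with a gluing map $\phi\in T_n(\Sigma)$. By definition, $T_n$-equivalence is a finite sequence of such twists, so it suffices to show $Z(M)=Z(M_\phi)$ for one twist and then induct on the number of steps.

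Since $T_n(\Sigma)$ is generated by $n$-th powers of Dehn twists, write $\phi=\tau_1^{\pm n}\cdots\tau_r^{\pm n}$. Because $\rho_Z$ is a (possibly projective) homomorphism and $Z(\tau_i)^n=I$ by hypothesis, we get $\rho_Z(\phi)=I$. In the projective case, the hypothesis is read as saying that $Z(\tau)^n$ is the identity on the nose with the chosen lifts. Concretely, one represents $\phi$ by the mapping cylinder of the product of twists, so that the composition of the $n$-th powers is honestly the identity; this is the one point where the $p_1$-structure or framing anomaly must be checked. For $RT_p$, the twist acts on a suitable basis diagonally by $\zeta$-powers that are $2p$-th or $p$-th roots of unity, so the $p$-th power is trivial once the framing is normalized.

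Next, cut $M$ along $\Sigma$ exactly as in the proof of Lemma~\ref{lemma:Gequiv-DWinvariants}. If $\Sigma$ is closed and separating, then
\[
Z(M)=\langle Z(H_1),Z(H_2)\rangle=\langle Z(H_1),\rho_Z(\phi)Z(H_2)\rangle=Z(M_\phi).
\]
If $\Sigma$ is non-separating, the same argument with a trace gives $Z(M)=\mathrm{Tr}\,Z(C)=\mathrm{Tr}(\rho_Z(\phi)Z(C))=Z(M_\phi)$. If $\Sigma$ has one boundary component, $\phi$ is supported in the interior and extends by the identity. One can then either cap the boundary off inside $U(\Sigma)$ to reduce to a closed surface, or apply the same gluing formula along the closed surface $\partial U(\Sigma)$, with $\widetilde\phi$ a product of twists along curves in $\partial U(\Sigma)$. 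For $M$ with boundary, the identical computation shows equality of the vectors $Z(M)\in Z(\partial M)$.

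For the consequence about $I_p$, take $Z=RT_p$; the twist eigenvalues are of the form $\zeta_p^{\,j(j+2)}$-type, which satisfy $Z(\tau)^p=I$ projectively. Suppose $N\supseteq M$ is a closed manifold. The twists performed on $M$ are along surfaces inside $M\subset N$, so they transform $N$ into a closed manifold $N'\supseteq M'$ with $RT_p(N')=RT_p(N)$. This gives $I_p(M)\subseteq I_p(M')$, and the reverse inclusion follows by symmetry, since $T_p$-equivalence is symmetric (invert the twists).

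The main obstacle is the projective ambiguity: a priori $\rho_Z(\phi)$ is only known to be a scalar. I would handle it by working with the extended, $p_1$-structured cobordism itself: gluing in the mapping cylinder of $\tau^n$ equals gluing in $n$ copies of the twist cylinder, and the hypothesis $Z(\tau)^n=I$ is taken at this cobordism level, so no phase appears.
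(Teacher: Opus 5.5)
Your argument is the paper's argument. The paper just says that the proof of Lemma~\ref{lemma:Gequiv-DWinvariants} carries over verbatim with $Z$ in place of $Z_G$, and then deduces $I_p(M)=I_p(M')$ from $RT_p(M)=RT_p(M')$. Two of your additions fill in details the paper leaves implicit:
\begin{itemize}
\item your treatment of surfaces with boundary, which should use the boundary of a thickening of a slightly shrunken copy of $\Sigma$ (a closed surface in the interior), since $\partial U(\Sigma)$ itself meets $\partial M$;
\item your transport argument for the ideals: do the same twists inside a closed $N\supseteq M$, then invert them for the reverse inclusion.
\end{itemize}

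You should drop your claim that reading $Z(\tau)^n=I$ at the level of $p_1$-structured mapping cylinders makes the phase disappear. Functoriality only gives $Z(M_\phi,\alpha)=Z(M)$ for the $p_1$-structure $\alpha$ that $M_\phi$ inherits by gluing. That $\alpha$ need not be the canonical structure used to define $RT_p(M_\phi)$, so in general you only get $RT_p(M_\phi)=\kappa^{j}RT_p(M)$ for some integer $j$.

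Your argument never uses that $\phi$ acts trivially on homology, and for twists that do not, the phase really occurs. Take a once-punctured torus in $S^3$ containing an unknot $U$ with surface framing $-1$. Cutting along it and regluing by $t_U^{\pm 5}$ is $\pm 1/5$-surgery on a push-off of $U$ relative to that framing. For the appropriate sign this gives $S^3_{-4/5}(U)\cong S^3_{-4}(U)$. Since the twists satisfy $\theta_i^5=1$, the surgery formula gives $RT_5(S^3_{-4}(U))/RT_5(S^3)=\langle U_{-4}(\omega)\rangle/\langle U_{-1}(\omega)\rangle=\langle U_{+1}(\omega)\rangle/\langle U_{-1}(\omega)\rangle$. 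This ratio is a nontrivial root of unity (the anomaly).

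For the ideal statement this is harmless, and this is how the anomaly should be handled. The discrepancy is a root of unity. If $RT_p(N)\neq 0$, it lies in $\mathbb{Q}(\zeta_p)$, so it is a power of $\zeta_p$ and hence a unit of $\Z[p^{-1},\zeta_p]$. If $RT_p(N)=0$, then $RT_p(N')=0$ as well. Either way $RT_p(N)$ and $RT_p(N')$ generate the same ideal, and $I_p(M)=I_p(M')$ follows.

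The paper's one-line proof glosses over the same point. For $Z=RT_p$, state the invariance as equality up to units, not on the nose.
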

\begin{proof}
	This lemma can be seen as a generalization of Lemma \ref{lemma:Gequiv-DWinvariants}, and the proof is identical, replacing the Djikgraaf-Witten TQFT by the TQFT $Z$. The last part follows from applying this to $Z=RT_p$, since $RT_p(M)=RT_p(M')$ implies that $I_p(M)=I_p(M').$
\end{proof}

If two manifolds $M$ and $M'$ are $T_n$-equivalent for every $n$, then in particular they have the same Witten-Reshetikhin-Turaev invariants at every level and the same Dijkgraaf-Witten invariants for every finite group. Similarly, if they are $\ker \rho^G$-equivalent for every finite group $G$, they have the same Dijkgraaf-Witten invariants; either way, Lemma \ref{lem:Funar} implies that $\pi_1(M)$ and $\pi_1(M')$ have isomorphic profinite completions. We remark that because of \cite{Liu23}, if $M$ is hyperbolic there are only a finite number of manifolds whose fundamental group has the same profinite completion as $\pi_1(M)$.

\section{Strong approximation for the restrictions of the quantum representations $\rho_p$ to subgroups}
\label{sec:strongApprox}

In this section, we will establish that the strong approximation of Masbaum-Reid from Section \ref{sec:quantumRep} still holds for the restriction of $\rho_p$ to many subgroups of $\mathrm{Mod}(\Sigma_g).$

For $g\geq 1$ and $p\geq 5$ a prime number, let $d_{p,g}=\dim RT_p(\Sigma_g),$ where $\Sigma_g$ is the closed surface of genus $g.$ Recall that $\rho_p$ preserves a Hermitian form and thus its image is in $PSU_{d_{p,g}}$. We start with the following lemma:

\begin{lem}\label{lem:SimplePerfect}
	Let $\Sigma$ be a closed surface of genus at least $2$ and let $N\triangleleft \mathrm{Mod}(\Sigma).$  Then:
	
	\begin{enumerate}[(a)]

		\item Either $\rho_p( N)$ is trivial or $\rho_p(\Gamma_k N)$ is dense in $PSU_{d_{p,g}}$ for any $k\geq 1.$
		\item Let $q=1 \pmod{ p}$ be a prime such that $\rho_{p,q}(\mathrm{Mod}(\Sigma))=\mathrm{PSL}_{d_{p,g}}(\mathbb{F}_q).$ Then either $\rho_{p,q}(N)$ is trivial or $\rho_{p,q}(\Gamma_k N)=\mathrm{PSL}_{d_{p,g}}(\mathbb{F}_q)$ for any $k\geq 1.$
	\end{enumerate}	
\end{lem}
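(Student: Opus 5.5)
The plan is to use one principle for both parts. If $N\triangleleft \mathrm{Mod}(\Sigma)$, then the image of $N$ is normal in the image of $\mathrm{Mod}(\Sigma)$, and the image of $\Gamma_k N$ equals $\Gamma_k$ of the image of $N$. So it suffices to show that the relevant target group $G$ satisfies two conditions: every nontrivial normal subgroup of $G$ is, up to closure in part (a), all of $G$; and $G$ is perfect, so that $\Gamma_k G=G$ for all $k$.

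For part (b) this is almost immediate from Theorem \ref{thm:MasbaumReid}. Since $\rho_{p,q}$ is surjective, $\rho_{p,q}(N)$ is a normal subgroup of $\mathrm{PSL}_{d_{p,g}}(\mathbb{F}_q)$. This group is simple for $d_{p,g}\geq 2$, excluding the small exceptions $(d,q)=(2,2),(2,3)$; these do not occur because $q\equiv 1 \pmod p$ with $p\geq 5$ forces $q\geq 11$, and $d_{p,g}\geq 2$ for $g\geq 2$. Hence $\rho_{p,q}(N)$ is trivial or everything. In the latter case, $\rho_{p,q}(\Gamma_k N)=\Gamma_k\mathrm{PSL}_{d_{p,g}}(\mathbb{F}_q)=\mathrm{PSL}_{d_{p,g}}(\mathbb{F}_q)$, since a nonabelian simple group is perfect.

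For part (a) I will work with closures. By the same strong approximation theory, and in fact by Larsen's density result for quantum representations (or by the compactness argument together with Theorem \ref{thm:MasbaumReid}: if the closure were a proper closed subgroup, its Zariski closure would be a proper algebraic subgroup, contradicting surjectivity mod $q$ for infinitely many $q$), $\rho_p(\mathrm{Mod}(\Sigma))$ is dense in $PSU_{d_{p,g}}$. Let $H=\overline{\rho_p(N)}$. It is a closed subgroup normalized by a dense subgroup, hence normal in $PSU_{d}$. Now $PSU_d$ is a compact connected Lie group whose Lie algebra $\mathfrak{su}_d$ is simple, so every closed normal subgroup is either finite and central, hence trivial since the center of $PSU_d$ is trivial, or all of $PSU_d$. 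If $H=PSU_d$, then $\overline{\rho_p(\Gamma_k N)}\supseteq \Gamma_k H$ by continuity of commutators: $[\overline A,\overline B]\subseteq \overline{[A,B]}$, applied inductively. And $\Gamma_k PSU_d=PSU_d$, since a compact connected semisimple group is perfect; indeed every element is a commutator by Goto's theorem, so we need not even worry about closures of generated subgroups. If instead $H$ is trivial, then $\rho_p(N)$ is trivial.

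The main obstacle is the density of $\rho_p(\mathrm{Mod}(\Sigma))$ in $PSU_{d_{p,g}}$, which the statement implicitly needs and which is not among the earlier results as stated. I would first try to deduce it directly from Theorem \ref{thm:MasbaumReid}. Work with the Zariski closure $\mathbf{G}$ of the image in the algebraic group $PSU$, defined over $\Q(\zeta_p+\zeta_p^{-1})$. Its reductions mod infinitely many primes $q$ contain $\mathrm{PSL}_d(\mathbb{F}_q)$, which forces $\dim \mathbf{G}=\dim PSL_d$ by point counting. So $\mathbf{G}$ is everything, and the real points of the identity component are $PSU_d$. Density in the Euclidean topology then needs an extra step: the closure is a closed subgroup that is Zariski dense. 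A Zariski dense closed subgroup of a compact connected simple group has a Lie subalgebra that is an ideal, so it is either finite or everything, and finiteness is excluded because the image is infinite (Funar). Alternatively, one can simply cite Larsen--Wang for density.
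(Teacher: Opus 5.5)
Your proposal is correct and follows the same route as the paper: density of $\rho_p(\mathrm{Mod}(\Sigma))$ in $PSU_{d_{p,g}}$ via Larsen--Wang, normality of $\overline{\rho_p(N)}$ forcing it to be trivial or everything, then perfectness to handle $\Gamma_k$, with the identical simple-and-perfect argument for $\mathrm{PSL}_{d_{p,g}}(\mathbb{F}_q)$ in (b). Your extra care (using only the inclusion $\Gamma_k\overline{A}\subseteq\overline{\Gamma_k A}$ where the paper asserts an equality, the Lie-algebra argument for closed normal subgroups, and ruling out the small exceptions to simplicity) is welcome. Deriving density from Theorem~\ref{thm:MasbaumReid} is valid but not independent, since Masbaum--Reid's proof itself relies on Larsen--Wang.
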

\begin{proof}
	\begin{enumerate}[(a)]
		\item By \cite{LW05}, the image of $\rho_p( Mod(\Sigma_g))$ is dense in $\mathrm{PSU}_{d_{p,g}}$ for any $g\geq 2$ and any prime $p\geq 5.$ 
		
		Recall that $\mathrm{PSU}_d$ is a simple group for any $d>1,$ and moreover, that $d_{g,p}>1$ for any $g\geq 2$ and $p\geq 5.$ 
		
		Because $\overline{\rho_p(N)}\triangleleft \overline{\rho_p(Mod(\Sigma))}$, this implies that $\overline{\rho_p(N)}$ is either trivial or $\mathrm{PSU}_{d_{p,g}}$.
		Now, $\mathrm{PSU}_{d_{p,g}}$ is also a perfect group, therefore $$\overline{\rho_p(\Gamma_k N)}=\overline{\Gamma_k \rho_p(N)}=\Gamma_k \overline{\rho_p(N)}=\Gamma_k PSU_{d_{p,g}}=PSU_{d_{p,g}}.$$

		\item The proof is exactly the same as the previous case, using that $\mathrm{PSL}_{d_{p,g}}(\mathbb{F}_q)$ is simple and perfect.
	\end{enumerate}
\end{proof}
\begin{lem}\label{lem:nonTrivialTorelli}
	Let $g\geq 2,$ let $p\geq 5$ be a prime and let $q=1 \pmod{p}$ be a prime. Then $\rho_{p,q}(I(\Sigma_g))$ is not trivial.
\end{lem}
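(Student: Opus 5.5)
The plan is to exhibit one explicit element of $I(\Sigma_g)$ whose image under $\rho_{p,q}$ is not a scalar matrix, hence not trivial in $\mathrm{PSL}_{d_{p,g}}(\mathbb{F}_q)$. The natural candidate is a Dehn twist $t_c$ along a separating simple closed curve $c\subset \Sigma_g$ cutting off a genus $1$ subsurface; this exists since $g\geq 2$. A separating twist acts trivially on $H_1(\Sigma_g;\Z)$, so $t_c\in I(\Sigma_g)$.

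First, I will compute $\rho_p(t_c)$ in a basis adapted to $c$. Choose a pants decomposition of $\Sigma_g$ containing $c$, with dual trivalent graph $\Gamma$. The $\mathrm{SO}_3$ TQFT space $RT_p(\Sigma_g)$ has the standard basis of $p$-admissible colorings of $\Gamma$ by even colors (\cite{BHMV}). Up to renormalizing each basis vector by an element of $\Z[p^{-1},\zeta_p]$, this basis spans the lattice on which $\rho_p$ is defined over $\Z[p^{-1},\zeta_p]$. In this basis $\rho_p(t_c)$ is diagonal. The entry on a coloring with color $i$ on the edge dual to $c$ is the twist coefficient $\mu_i=(-1)^i\zeta_p^{i^2+2i}$, up to a global projective scalar.

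Second, both colors $0$ and $2$ occur on $c$. Color $0$ is always admissible. Color $2$ is admissible because a genus-one surface with one boundary component colored $2$ admits an admissible coloring: put color $2$ on the loop and use the triple $(2,2,2)$, which is admissible for $p\geq 5$. The complementary side has genus $g-1\geq 1$ and is handled the same way. Hence $\rho_p(t_c)$ has two eigenvalues whose ratio is $\mu_2/\mu_0=\zeta_p^{8}$.

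Third, I will reduce modulo $J$. If $\rho_{p,q}(t_c)$ were trivial in $\mathrm{PSL}_{d_{p,g}}(\mathbb{F}_q)$, the reduced diagonal matrix would be scalar, forcing $\zeta_p^8\equiv 1 \pmod J$. Since $p$ is odd, $\zeta_p^8$ is a primitive $p$-th root of unity. Therefore $1-\zeta_p^8$ divides $p=\prod_{j=1}^{p-1}(1-\zeta_p^{8j})$ and is a unit in $\Z[p^{-1},\zeta_p]$. So $1-\zeta_p^8\notin J$, which is a contradiction. (The hypothesis $q\equiv 1\pmod p$ only ensures that $\Z[p^{-1},\zeta_p]/J=\mathbb{F}_q$; the argument needs only that $J$ is a proper ideal of $\Z[p^{-1},\zeta_p]$.)

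The main obstacle is the integrality bookkeeping in the first step. One must check that the colored-graph basis, after normalization, gives a basis of the $\Z[p^{-1},\zeta_p]$-lattice whose reduction mod $J$ defines $\rho_{p,q}$; this needs the normalizing factors (quantum integers and theta/tetrahedron evaluations) to be $p$-units. If this is problematic, I would instead argue intrinsically. The reduction $\overline{\rho_p(t_c)}$ is annihilated by the reduction of $\prod_i (X-\mu_i)$ and satisfies a trace identity. A scalar reduction $\lambda I$ would force all reduced $\mu_i$ to equal $\lambda$, since $t_c$ is diagonalizable over the ring with eigen-projectors having $p$-unit denominators. This again yields $\zeta_p^8\equiv1$.
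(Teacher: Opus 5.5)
Your proposal is correct and takes essentially the same route as the paper. Both arguments take a separating Dehn twist, read off its spectrum as explicit roots of unity, and use that differences of distinct $p$-th roots of unity are units in $\Z[p^{-1},\zeta_p]$ (because $p=\prod_{j=1}^{p-1}(1-u_p^j)$), so the reduction mod $J$ cannot be scalar. You are in fact more careful than the paper: you check that two distinct eigenvalues actually occur (colors $0$ and $2$ are both admissible on the curve), and you handle the projective scalar and the lattice/integrality issue, all of which the paper leaves implicit.
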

\begin{proof}
	Let $\gamma$ be a separating curve in $\Sigma_g$ and let $t_{\gamma}$ be the associated Dehn twist. The spectrum of $\rho_p(t_{\gamma})$ consists of $(-\zeta_p)^{i^2-1}$ for $i=1,\ldots,\frac{p-1}{2}.$ Note that $u_p=-\zeta_p$ is a primitive $p$-th root of unity. When reducing mod $J$ where $\Z[p^{-1},\zeta_p]/J\simeq \mathbb{F}_q,$ the elements $u_p^i$ for $i\in \Z/p\Z$ are all different mod $J:$ otherwise, one would get that $u_p^i-u_p^j\in J,$ but $u_p^i-u_p^j$ is a unit in $\Z[p^{-1},\zeta_p]$, since $p=\prod_{i=1}^{p-1}(1-u_p^i)$.
	
	Hence $\rho_{p,q}(t_{\gamma})\neq I_{d_{p,g}} \pmod{ J},$ and since $t_{\gamma}\in I(\Gamma)$ (in fact, it even belongs to the Johnson kernel of $\Sigma$), we have that $\rho_{p,q}(I(\Sigma_g))$ is not trivial.
\end{proof}

\begin{prop}
	\label{prop:densityGammakI} Let $g\geq 2,$ let $p\geq 5$ be a prime and $q=1 \pmod{ p}$ be another prime. Also let $k\geq 1,$ let $n$ be coprime to $p$ and let $H$ be a finite index normal subgroup of $\mathrm{Mod}(\Sigma),$ and write
	$$N=\Gamma_k I(\Sigma_g) \cap T_n(\Sigma_g)\cap H.$$
	Then $\rho_p(N)$ is dense in $PSU_{d_{p,g}}.$ Furthermore, if $q$ is such that $\rho_{p,q}(\mathrm{Mod}(\Sigma))=\mathrm{PSL}_{d_{p,g}}(\mathbb{F}_q),$ and the index of $H$ is strictly less than the order of $\mathrm{PSL}_{d_{p,g}}(\mathbb{F}_q)$ then 
	$$\rho_{p,q}(N)=\mathrm{PSL}_{d_{p,g}}(\mathbb{F}_q)$$
	also. 
\end{prop}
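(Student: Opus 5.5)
The plan is to reduce everything to Lemma \ref{lem:SimplePerfect}: I will show that $N$ is a normal subgroup of $\mathrm{Mod}(\Sigma_g)$ whose image under $\rho_p$ (resp. $\rho_{p,q}$) is non-trivial. Normality is immediate. $\Gamma_k I(\Sigma_g)$ is characteristic in the normal subgroup $I(\Sigma_g)$, and $T_n$ and $H$ are normal, so the intersection is normal. Since $N=\Gamma_1 N$, Lemma \ref{lem:SimplePerfect} with $k=1$ then says the image is either trivial or dense (resp. all of $\mathrm{PSL}_{d_{p,g}}(\mathbb{F}_q)$). So it suffices to produce one element of $N$ with non-trivial image.

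I will build this element step by step, applying Lemma \ref{lem:SimplePerfect} three times. Start with $N_0=I(\Sigma_g)$. Its image under $\rho_{p,q}$ is non-trivial by Lemma \ref{lem:nonTrivialTorelli}; hence its image under $\rho_p$ is non-trivial as well. Lemma \ref{lem:SimplePerfect} then gives that $\rho_p(\Gamma_k I)$ is dense and $\rho_{p,q}(\Gamma_k I)=\mathrm{PSL}_{d_{p,g}}(\mathbb{F}_q)$.

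Next I intersect with $T_n$. Let $t$ be a separating Dehn twist. Then $t^n\in T_n$, and $\rho_p(t^n)$ is diagonalizable with eigenvalues $u_p^{n(i^2-1)}$. Since $n$ is coprime to $p$, these are not all equal (take $i=1,2$ with $3n\not\equiv 0 \bmod p$), so $\rho_p(t^n)$ is non-scalar. The proof of Lemma \ref{lem:nonTrivialTorelli} shows these eigenvalues stay distinct mod $J$, so $\rho_{p,q}(t^n)$ is also non-trivial. Thus $T_n$ is a normal subgroup with non-trivial image.

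The key trick for the intersection is the commutator inclusion: for normal subgroups $A,B$ one has $[A,B]\subset A\cap B$. I will set $A=\Gamma_k I$ and $B=T_n$. Both images are non-trivial normal subgroups of a simple group, so their closures are all of $\mathrm{PSU}$ (resp. both images equal $\mathrm{PSL}_{d_{p,g}}(\mathbb{F}_q)$). Hence $\rho([A,B])=[\rho(A),\rho(B)]$ is non-trivial, because $\mathrm{PSU}$ and $\mathrm{PSL}_{d_{p,g}}(\mathbb{F}_q)$ are non-abelian, and a dense subgroup of a non-abelian group cannot commute with a dense subgroup. Therefore $A\cap B$ has non-trivial image, and by Lemma \ref{lem:SimplePerfect} its image is dense, resp. full.

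Finally I intersect with $H$, which is where the two conclusions differ. For density, $H$ has finite index, so $\overline{\rho_p(H)}$ is a finite-index closed subgroup of the connected group $\mathrm{PSU}_{d_{p,g}}$, hence all of it. The same commutator argument with $A\cap B$ and $H$ then gives density of $\rho_p(N)$.

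For the mod $q$ statement, the index of $\rho_{p,q}(H)$ in $\mathrm{PSL}_{d_{p,g}}(\mathbb{F}_q)$ is at most $[\mathrm{Mod}:H]$, which by hypothesis is less than the order of the group. So $\rho_{p,q}(H)$ is non-trivial, and since it is normal in a simple group it is the whole group. The commutator argument once more gives $\rho_{p,q}(N)$ non-trivial, hence equal to $\mathrm{PSL}_{d_{p,g}}(\mathbb{F}_q)$.

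The main obstacle I expect is verifying non-triviality for $T_n$, in particular keeping track of projective scalars: I must check that $\rho_p(t)^n$ is non-scalar, which needs $p\nmid 3n$, and this relies on $p\geq 5$ together with $\gcd(n,p)=1$.
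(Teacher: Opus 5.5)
Your proof is correct, but it handles the intersections differently from the paper.

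\textbf{The paper's route.} It never uses a commutator argument. A separating twist $t_\gamma$ already lies in $I(\Sigma_g)$, so $t_\gamma^{un}\in I(\Sigma_g)\cap T_n$, where $un+vp=1$. Since $\rho_p(t_\gamma^p)$ is projectively trivial, $\rho_{p,q}(t_\gamma^{un})=\rho_{p,q}(t_\gamma)$, which is non-trivial by Lemma \ref{lem:nonTrivialTorelli}. Applying Lemma \ref{lem:SimplePerfect} to the normal subgroup $I(\Sigma_g)\cap T_n$ gives density, resp.\ surjectivity, for $\Gamma_k(I\cap T_n)\subseteq \Gamma_k I\cap T_n$. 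The subgroup $H$ is then handled by an index count. The image $\rho(N)$ has finite index in $\rho(\Gamma_kI\cap T_n)$, resp.\ index at most $[\mathrm{Mod}(\Sigma):H]<|\mathrm{PSL}_{d_{p,g}}(\mathbb{F}_q)|$, and a finite-index subgroup of a dense subgroup of the connected group $\mathrm{PSU}_{d_{p,g}}$ is dense.

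\textbf{Your route.} You prove non-triviality separately for $\Gamma_kI$, $T_n$ and $H$. You then pass to intersections using $[A,B]\subseteq A\cap B$ for normal subgroups, together with the fact that two dense (resp.\ full) subgroups of a non-abelian group cannot commute.

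\textbf{Comparison.} Your argument yields a reusable principle: normal subgroups whose image is non-trivial (hence dense, resp.\ everything) are closed under finite intersections. So it would work even without an explicit common element. The paper's argument is shorter because one explicit element lies in $I\cap T_n$. In fact your own $t^n$ already lies there, since a separating twist is in the Torelli group. That makes your separate treatment of $I(\Sigma_g)$ and your first commutator step unnecessary; the commutator (or index) argument is only needed for $H$. Your direct eigenvalue check that $\rho_p(t)^n$ is non-scalar, using $p\nmid 3n$, is equivalent to the paper's B\'ezout trick.
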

\begin{proof}
	Write $un+vp=1$ for some integer $u,v\in \Z.$
	First, notice that $\rho_p$ or $\rho_{p,q}$ are non-trivial on $I(\Sigma_g)\cap T_n(\Sigma_g):$ indeed, for $\gamma$ a non-trivial separating closed curve it suffices to consider
	$$\rho_{p,q}(t_{\gamma})=\rho_{p,q}(t_{\gamma}^{un})\in I(\Sigma_g)\cap T_n,$$
	where the equality follows from $un+vp=1$ and the fact that $\rho_{p}(t^p_{\gamma})=I_{d_{p,g}}$ for any Dehn twist $t_{\gamma}.$
	
	Lemma \ref{lem:nonTrivialTorelli} implies that this element is non trivial, and thus Lemma \ref{lem:SimplePerfect} immediately gives the proposition for the subgroup $N_0=\Gamma_kI(\Sigma_g)\cap T_n.$ To get it for $N,$ it suffices to remark that a finite index subgroup of a dense subgroup of $PSU_{d_{p,g}}$ is again dense in $PSU_{d_{p,g}},$ and that a normal subgroup of $\mathrm{PSL}_{d_{p,g}}(\mathbb{F}_q)$ of index strictly less than its order is all of $\mathrm{PSL}_{d_{p,g}}(\mathbb{F}_q).$
\end{proof}

\section{The main results}\label{sec:proofs}

We can now prove a more precise version of Theorem \ref{thm:intro1}.
\begin{thm}\label{thm:main}
	Take $k\geq 1$, $p\geq 5$ prime, $n$ coprime to $p$, $\mathcal{G}=\{G_1,\dots, G_r\}$ a finite collection of finite groups, $M$ a compact oriented $p$-good $3$-manifold, $g$ a positive integer and $v>0$. Let $N$ be any compact oriented manifold; then there is a hyperbolic $3$-manifold $N'$ with the following properties:
	
	\begin{itemize}
		\item  $N'$ is $(Y_k,T_n,\mathcal{G})$-equivalent to $N$;
		\item $N'$ does not contain any essential, non-boundary parallel surface of genus less than $g$;
		\item $N'$ has volume larger than $v$;
		\item $I_p(M)\nsubseteq I_p(N')$ which implies that $N'$ does not embed in $M$.
	\end{itemize}
\end{thm}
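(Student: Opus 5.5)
We first use Proposition \ref{prop:hyperbolic} (or the subsequent remark, which handles the $\mathcal{G}$-condition since each $\mathrm{Ker}\rho_{G_j}$ has finite index) to replace $N$ by a hyperbolic $N_0$. This $N_0$ is $(Y_k,T_n,\mathcal{G})$-equivalent to $N$, has volume $>v$, and has no essential non-boundary-parallel surface of genus $<g$. The point is then to modify $N_0$ further by one more twist, in the same subgroup, so as to destroy the containment $I_p(M)\subseteq I_p(N')$. At the same time we must keep the geometric properties. The plan is to perform this last twist along a Heegaard surface inside the construction of Proposition \ref{prop:hyperbolic}. Concretely, we take a Heegaard splitting $N=C\cup_{\Sigma}H$ of genus $g_0\geq 2$ and consider manifolds $N_f=C\cup_{f}H$ with
\[
f\in N_{\Sigma}:=\Gamma_kI(\Sigma)\cap T_n(\Sigma)\cap \bigcap_j \mathrm{Ker}\rho_{G_j,\Sigma}.
\]
All such $N_f$ are $(Y_k,T_n,\mathcal{G})$-equivalent to $N$.

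\textbf{Step 1: an arithmetic target.} Since $M$ is $p$-good, $RT_p(M)\neq 0$. The closed manifold $W=M\cup \overline{M}$ (the double, or any closed manifold containing $M$ with nonzero invariant, e.g.\ $\|RT_p(M)\|^2$) shows that $I_p(M)$ contains a nonzero element $x$. We choose a prime $q\equiv 1 \pmod p$ with $\rho_{p,q}$ surjective onto $\mathrm{PSL}_{d_{p,g_0}}(\mathbb{F}_q)$ (Theorem \ref{thm:MasbaumReid}) and such that $x\notin J$. This is possible since infinitely many $q$ are available and $x$ lies in only finitely many maximal ideals. We also take $q$ large enough that $|\mathrm{PSL}_{d}(\mathbb{F}_q)|$ exceeds the index of $\bigcap_j\mathrm{Ker}\rho_{G_j}$. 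Proposition \ref{prop:densityGammakI} then gives $\rho_{p,q}(N_\Sigma)=\mathrm{PSL}_{d_{p,g_0}}(\mathbb{F}_q)$. It now suffices to find $f$ with $I_p(N_f)\subseteq J$, since then $x\in I_p(M)\setminus I_p(N_f)$.

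\textbf{Step 2: forcing $I_p(N_f)\subseteq J$.} Any closed $W\supseteq N_f$ decomposes as $W=(W\setminus N_f)\cup N_f$. Here $RT_p(W)$ is a pairing of some vector with $RT_p(N_f)=RT_p(C)$-image of $\rho_p(f)RT_p(H)$, more precisely the vector obtained by applying the cobordism $C$ to $\rho_p(f)v_H$, where $v_H=RT_p(H)\in RT_p(\Sigma)$. So it suffices that this vector vanishes mod $J$. Reducing mod $J$, the map $RT_p(C)$ becomes an $\mathbb{F}_q$-linear map $c$ out of $RT_p(\Sigma)\otimes\mathbb{F}_q$, and $\bar v_H$ is nonzero (the handlebody vector is a basis vector). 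If $c=0$ mod $J$ there is nothing to do. Otherwise $\ker c$ contains a nonzero vector whenever $\dim RT_p(\Sigma)>\dim RT_p(\partial_-C)$, which we arrange by stabilizing the Heegaard splitting to raise $g_0$; more simply, $c$ has a nonzero-vector kernel, or else we pick any vector. By transitivity of $\mathrm{PSL}_d(\mathbb{F}_q)$ on lines, there is $\bar f\in\rho_{p,q}(N_\Sigma)$ sending the line of $\bar v_H$ into $\ker c$. Any lift $f_0\in N_\Sigma$ of $\bar f$ gives $I_p(N_{f_0})\subseteq J$, and this property depends only on $\rho_{p,q}(f)$. Projective ambiguity is harmless, since it only changes the result by units.

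\textbf{Step 3: recovering the geometry.} The property from Step 2 is invariant under replacing $f_0$ by $f_0 h$ with $h\in N_\Sigma\cap\mathrm{Ker}\rho_{p,q}$, a finite index normal subgroup of $N_\Sigma$. We take a generic pseudo-Anosov $\phi\in N_\Sigma\cap \mathrm{Ker}\rho_{p,q}$ as in the proof of Proposition \ref{prop:hyperbolic}; a power of any generic element of $N_\Sigma$ lies there. We then set $f=f_0\phi^m$. The main obstacle is redoing the Hempel-distance and volume arguments of Proposition \ref{prop:hyperbolic} for $f_0\phi^m$ instead of $\phi^m$. The plan is to absorb $f_0$ into $C$: the splitting $C\cup_{f_0\phi^m}H$ is $C'\cup_{\phi^m}H$ with $C'$ the compression body $C$ remarked by $f_0$. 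The disk set of $C'$ is $f_0^{-1}$ of that of $C$, so we need $\phi$ generic with respect to both disk sets. This is again achievable by the cited results \cite{Long86,Mas86,Mah}, since the bad set is still a closed nowhere-dense set of laminations. With that, the arguments of \cite{Hempel,Sch,BMNS} apply verbatim for large $m$, giving hyperbolicity, the genus bound and volume $>v$.
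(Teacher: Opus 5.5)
Your proof is correct. Steps 1 and 2 are the paper's argument: choose $J$ with $x\notin J$ and $\rho_{p,q}$ surjective on $\Gamma_kI\cap T_n\cap\bigcap_j\ker\rho_{G_j}$ via Proposition~\ref{prop:densityGammakI}, then use transitivity on lines to push $RT_p(H)$ into $\ker RT_p(C)\bmod J$.

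\textbf{Where you differ: hyperbolizing while keeping non-embedding.}
\begin{itemize}
\item The paper applies Proposition~\ref{prop:hyperbolic} and its remark as a black box to $N_\phi$, using $T_{np}$ in place of $T_n$. Since $\rho_p(t_\gamma^p)$ is trivial, Lemma~\ref{lem:Tnequiv} gives $I_p(N')=I_p(N_\phi)$ exactly, whatever surface the extra twists are along.
\item You stay on the same Heegaard surface and take $f=f_0\phi^m$, with $\phi$ a generic pseudo-Anosov in the finite-index subgroup $N_\Sigma\cap\ker\rho_{p,q}$. Then $\rho_{p,q}(f)=\rho_{p,q}(f_0)$, so the vanishing mod $J$ is automatic.
\end{itemize}

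\textbf{What each route buys.}
\begin{itemize}
\item Your mechanism only preserves $I_p(N')\subseteq J$, not the whole ideal. That suffices here, but not for the several-primes trick in Corollary~\ref{cor:pairwiseNotEmbedding}, which needs $I_{p_j}$ to stay unchanged.
\item In exchange, $N'$ comes from $N$ by a single Torelli twist, and you need neither the passage to $T_{np}$ nor Lemma~\ref{lem:Tnequiv}.
\item The ``main obstacle'' you flag is not really one. $C'\cup_{\phi^m}H$ is exactly the construction of Proposition~\ref{prop:hyperbolic} (with its remark, putting $\ker\rho_{p,q}$ into $\mathcal{H}$) applied to $N_{f_0}$ with the re-marked splitting $C'\cup H$. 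So no new geometric input is needed beyond what the paper already uses.
\end{itemize}

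\textbf{Loose ends in the write-up.}
\begin{itemize}
\item The opening replacement of $N$ by a hyperbolic $N_0$ is never used and should be deleted.
\item The stabilization that guarantees $\dim RT_p(\Sigma)>\dim RT_p(\partial N)$ must be done before choosing $q$, because the Masbaum--Reid surjectivity is genus-dependent.
\item The phrase ``or else we pick any vector'' has no content and should be dropped; a nonzero kernel is exactly what the stabilization provides.
\end{itemize}
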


Theorem \ref{thm:intro1} follows from Theorem \ref{thm:main} by applying Corollary \ref{cor:convergence}.
\begin{proof}
	Let $\Sigma$ be a Heegaard surface for $N$ (after stabilizing we can assume its genus is $2$ or more). Let $H$ be the handlebody of the Heegaard decomposition and $C$ the compression body; its lower boundary is $\Sigma':=\partial N$ and its upper boundary is $\Sigma$. 	
	
	Consider a $\phi\in Mod(\Sigma)$ and $N_\phi$ the manifold obtained by a Torelli twist of $\phi$; we have that $RT_p(N_\phi)=RT_p(C)\circ \rho_p(\phi)(RT_p(H))$, where we consider $C$ as cobordism from $\Sigma$ to $\Sigma'$. 
	
	Notice that since $\Sigma'$ is obtained by compressing $\Sigma$ along a non-empty collection of disjoint essential simple closed curves, we have $\dim RT_p(\Sigma')<\dim RT_p(\Sigma)$ and thus $\ker(RT_p(C))\pmod{J}$ is not trivial. Notice furthermore that $RT_p(H)\neq 0 \pmod{J}$. If $\phi$ is such that 
	\begin{equation}\label{eq:zeroModJ}\rho_p(\phi)(RT_p(H)) \subset \ker RT_p(C) \pmod{ J}\end{equation} for some ideal $J\subseteq \Z\left[p^{-1},\zeta_p\right]$, then $RT_p(N_\phi)=0 \pmod{ J}$, which implies that $I_p(N_\phi)\subseteq J$.
	
	By Theorem \ref{thm:MasbaumReid}, there are infinitely many $q$'s such that $\rho_{p,q}(Mod(\Sigma))=\mathrm{PSL}_{d_{p,g}}(\mathbb{F}_q)$; there are also infinitely many $q$'s such that the order of the latter group is bigger than the index of $\cap_i \ker \rho^{G_i}$, since it increases with $q$. Therefore by Proposition \ref{prop:densityGammakI} we have that $\rho_{p,q}(\Gamma_kI(\Sigma)\cap T_n\cap_i \ker\rho^{G_i})=\mathrm{PSL}_{d_{p,g}}(\mathbb{F}_{q})$ for infinitely many $q$'s. This means that for infinitely many  maximal ideals $J\subseteq \Z[p^{-1},\zeta_p]$), we can find $\phi\in \Gamma_kI(\Sigma)\cap T_n\cap_i \ker\rho^{G_i}$ that satisfies Equation \ref{eq:zeroModJ} and thus such that $I_p(N_\phi)\subseteq J\subseteq \Z[p^{-1},\zeta_p]$.
	
	Since $M$ is $p$-good, we know that $I_p(M)\neq 0$; therefore $I_p(M)\nsubseteq J$ for all but finitely many maximal ideals $J$. We can then choose a $J$ such that $I_p(M)\nsubseteq J$ and that there is $\phi$ with the required properties such that $I_p(N_\phi)\subseteq J$; this implies that $N_\phi$ cannot embed in $M$. 
	
	Therefore we have a manifold satisfying the requirements of the theorem except for hyperbolicity. If $N_\phi$ is not hyperbolic, then by Proposition \ref{prop:hyperbolic} and the subsequent remark, there exists hyperbolic $N'$ that is $(Y_k, T_{np},\mathcal{G})$-equivalent to $N_\phi$. In particular, $N'$ is $(Y_k,T_n,\mathcal{G})$-equivalent to $N_\phi$ and thus to $N$; because of Lemma \ref{lem:Tnequiv}, since $N'$ is $T_p$-equivalent to $N_\phi$, $I_p(N')=I_p(N_\phi)$ and thus $N'$ does not embed in $M$ either.

\end{proof}

\begin{cor}\label{cor:pairwiseNotEmbedding}
	Let $M$ be a compact oriented $3$-manifold, any $k,n$ be natural numbers.
	\begin{enumerate}
		\item If $M$ is very good, then for any $l\geq 1$ there exist $M_1,\dots, M_l$ connected compact hyperbolic manifolds which are all $(Y_k,T_n)$-equivalent to $M,$ but such that for any $i\neq j$, $M_i$ does not embed in $M_j.$
		\item Assume that $M$ embeds in a rational homology sphere, then there is an infinite sequence $(M_i)_{i\geq 1}$ of compact oriented hyperbolic $3$-manifolds, all $(Y_k,T_n)$-equivalent to $M,$ but such that for any $i\neq j,$ the $3$-manifold $M_i$ does not embed in $M_j.$
	\end{enumerate}

\end{cor}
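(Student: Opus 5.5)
The plan is to build the $M_i$ inductively, using Theorem \ref{thm:main} at each step with a prime chosen so that every previously built manifold is $p$-good. Non-embedding will then come from the Frohman--Kania-Bartoszy\'nska obstruction: $M_i$ embeds in $M_j$ only if $I_p(M_j)\subseteq I_p(M_i)$.

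For part (1), since $M$ is very good, there is $p_0$ such that $M$ is $p$-good for all primes $p\geq p_0$. Take distinct primes $p_1<\dots<p_l$, all $\geq p_0$ and coprime to $n$. First, set $M_1$ to be any hyperbolic manifold $(Y_k,T_{n'})$-equivalent to $M$ given by Proposition \ref{prop:hyperbolic}, where $n'=n\,p_1\cdots p_l$. Inductively, given $M_1,\dots,M_{j-1}$, apply Theorem \ref{thm:main} with $N=M$ and $p=p_j$ to get a hyperbolic $M_j$ that is $(Y_k,T_{n p_1\cdots\widehat{p_j}\cdots p_l})$-equivalent to $M$.

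The target ideal condition is $I_{p_j}(M_j)\subseteq J$ for a maximal ideal $J$ with $I_{p_j}(M_i)\nsubseteq J$ for all $i<j$. To get this from the proof of Theorem \ref{thm:main}, we need each $M_i$ with $i<j$ to be $p_j$-good, i.e.\ $I_{p_j}(M_i)\neq 0$. This is where the $T$-exponents matter: $M_i$ is $T_{p_j}$-equivalent to $M$ because its exponent is divisible by $p_j$. Lemma \ref{lem:Tnequiv} then gives $I_{p_j}(M_i)=I_{p_j}(M)\neq 0$. Each nonzero ideal lies in only finitely many maximal ideals, so all but finitely many $J$ avoid all of $I_{p_j}(M_i)$, $i<j$. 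The proof of Theorem \ref{thm:main} supplies infinitely many admissible $J$, so such a $J$ exists. This yields that $M_i$ does not embed in $M_j$ for $i<j$.

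For the reverse direction, $j<i$, I need $M_i$ not to embed in $M_j$. Here I would use the prime $p_i$. By construction $I_{p_i}(M_i)\subseteq J_i$. Also $M_j$ is $T_{p_i}$-equivalent to $M$, so $I_{p_i}(M_j)=I_{p_i}(M)$. It then suffices to choose $J_i$ avoiding $I_{p_i}(M)$ as well, which we can do for the same finiteness reason. With that choice, $I_{p_i}(M_j)=I_{p_i}(M)\nsubseteq J_i\supseteq I_{p_i}(M_i)$ for every $j\neq i$. So in fact every pair is handled by the same uniform argument: $M_i$ embeds in no $M_j$ with $j\ne i$.

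The main obstacle is keeping all the $T$-exponents compatible: $n$ times the other primes must stay coprime to $p_j$, which holds since the primes are distinct and coprime to $n$. The passage to hyperbolic manifolds in Theorem \ref{thm:main} uses $T_{np}$, and it preserves these conditions as well.

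For part (2), I would use Proposition \ref{prop:VeryGood}(4): $M$ embeds in a rational homology sphere, so it is very good, and the same holds for every manifold $Y_1$-equivalent to $M$. The infinite sequence cannot fix all primes in advance, since the exponent would be an infinite product. Instead, at step $j$ I would choose a new prime $p_j$ such that all earlier $M_i$ are $p_j$-good. This is possible because each $M_i$ is $Y_1$-equivalent to $M$ and hence very good, presumably since it embeds in a rational homology sphere obtained by the same Torelli twist. Then I build $M_j$ via Theorem \ref{thm:main}, taking its $T$-exponent divisible by $n p_1\cdots p_{j-1}$. This exponent choice gives $I_{p_i}(M_j)=I_{p_i}(M_{i})$-type equalities for $i<j$: since $M_j$ is $T_{p_i}$-equivalent to $M_i$'s predecessor data, we have $I_{p_i}(M_j)=I_{p_i}(M)$ and $I_{p_i}(M_i)=I_{p_i}(M)$ modified compatibly. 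The delicate point is arranging these equalities so that both embedding directions are obstructed; I expect this bookkeeping to be the hardest part.
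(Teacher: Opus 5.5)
\textbf{Part (1).} As written, $M_1$ comes from Proposition \ref{prop:hyperbolic} with exponent $np_1\cdots p_l$. That gives $I_{p_i}(M_1)=I_{p_i}(M)$ for all $i$, so your ``uniform argument'' does not apply to $M_1$. Nothing you control obstructs $M_1\hookrightarrow M_j$: for $p_i$ with $i\neq j$ the ideals of $M_1$ and $M_j$ coincide, and for $p_j$ the required inclusion $I_{p_j}(M_j)\subseteq I_{p_j}(M)$ is not contradicted. You must build $M_1$ like the others, from Theorem \ref{thm:main} with $p=p_1$ and exponent $np_2\cdots p_l$.

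Your first deduction also has the inclusion reversed. The facts $I_{p_j}(M_j)\subseteq J$ and $I_{p_j}(M_i)\nsubseteq J$ show that $M_j$ does not embed in $M_i$, not that $M_i$ does not embed in $M_j$.

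After these corrections the induction is superfluous, since all $I_{p_j}(M_i)$ with $i\neq j$ equal $I_{p_j}(M)$. What remains is exactly the paper's proof. Build each $M_i$ independently with prime $p_i$ and exponent $n\prod_{j\neq i}p_j$, and use $I_{p_i}(M_j)=I_{p_i}(M)\nsubseteq I_{p_i}(M_i)$.

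\textbf{Part (2).} Here there is a genuine gap. You set up the right ingredients: a new prime $p_j$ at each step, an exponent divisible by $np_1\cdots p_{j-1}$, and $p_j$-goodness of the earlier $M_i$ via rational homology spheres. But you never state the condition that makes them work. Your claim that $I_{p_i}(M_i)=I_{p_i}(M)$ ``modified compatibly'' is also wrong, since $M_i$ is built precisely so that $I_{p_i}(M)\nsubseteq I_{p_i}(M_i)$.

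The point is that the earlier $M_i$ were constructed before $p_j$ was chosen. So they are not $T_{p_j}$-equivalent to $M$, and $I_{p_j}(M_i)$ is unknown. In the proof of Theorem \ref{thm:main}, you must therefore choose the maximal ideal $J_j$ to contain none of $I_{p_j}(M),I_{p_j}(M_1),\ldots,I_{p_j}(M_{j-1})$. This is possible because these ideals are nonzero (the only place $p_j$-goodness of the $M_i$ is used), and each nonzero ideal lies in only finitely many maximal ideals. The paper phrases this as: the product of these ideals is not contained in $I_{p_j}(M_j)$.

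With this choice, for $i<j$ both directions are obstructed:
\begin{enumerate}
\item $M_j$ does not embed in $M_i$, because $I_{p_j}(M_i)\nsubseteq J_j\supseteq I_{p_j}(M_j)$.
\item $M_i$ does not embed in $M_j$, because $p_i$ divides the $T$-exponent of $M_j$, so $I_{p_i}(M_j)=I_{p_i}(M)\nsubseteq J_i\supseteq I_{p_i}(M_i)$.
\end{enumerate}
The condition ``$I_{p_j}(M_i)\nsubseteq J$ for all $i<j$'' that you wrote in part (1), where it was redundant, is exactly what part (2) needs.
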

Note that being $(Y_k,T_n)$-equivalent in particular implies having homeomorphic boundaries; hence the corollary is mostly interesting if $M$ has non-empty boundary.
\begin{proof}
	(1) Let $p_1,\dots, p_l$ be primes that do not divide $n$ and such that $M$ is $p_i$-good, and call their product $d$.  By Theorem \ref{thm:main}, for each $i$ there is a hyperbolic $M_i$ which is $(Y_k,T_{nd/p_i})$-equivalent to $M$ but such that $I_{p_i}(M)\nsubseteq I_{p_i}(M_i)$. However, since $M_i$ is $T_{p_j}$-equivalent to $M$, $I_{p_j}(M)=I_{p_j}(M_i)$ for all $i\neq j$, which implies that $I_{p_i}(M_i)\nsubseteq I_{p_i}(M_j)$ and therefore $M_i$ does not embed in $M_j$.
	
	(2) We first note that any $3$-manifold which is $Y_1$-equivalent to a subspace of rational homology sphere is itself a subspace of a rational homology sphere. We construct the sequence $(M_i)_{i\geq 1},$ as well as a sequence of prime numbers $(p_i)_{i\geq 1}$ coprime to $n$ inductively, so that 
	\[\forall j<i, \ I_{p_j}(M_i)=I_{p_j}(M)\]
	and
	\[ I_{p_i}(M_1)I_{p_i}(M_2)\ldots I_{p_i}(M_{i-1})I_{p_i}(M) \nsubseteq I_{p_i}(M_i).\]
	Note that these conditions immediately imply that $M_i$ does not embed in $M_j$ for $i\neq j.$

	To start with, since $M$ is very good by Proposition \ref{prop:VeryGood}, let $p_1\geq 5$ be a prime such that $M$ is $p_1$-good. By Theorem \ref{thm:main}, there exists a hyperbolic $M_1$ which is $(Y_k,T_n)$-equivalent to $M,$ such that $I_{p_1}(M)\nsubseteq I_{p_1}(M_1).$
	
	 Assume that $M_1,\ldots, M_l$ have been constructed. Since they are all $(Y_k,T_n)$-equivalent to $M,$ they are all $Y_1$-equivalent to $M,$ hence they are all subspaces of rational homology spheres, and by Proposition \ref{prop:VeryGood}, they are all very good.
	
	Let $p_{l+1}\geq 5$ be a prime which does not divide $N=np_1\ldots p_l,$ and such that $RT_{p_{l+1}}(M)\neq 0$ and that $RT_{p_{l+1}}(M_i)\neq 0$ for any $i\leq l.$ Then the ideal 
	$$J=I_{p_{l+1}}(M_1)I_{p_{l+1}}(M_2)\ldots I_{p_{l+1}}(M_{l})I_{p_{l+1}}(M)$$ of $\Z[\zeta_{p_{l+1}},\frac{1}{p_{l+1}}]$ is non-zero. By the proof of Theorem \ref{thm:main}, there exists a hyperbolic $3$-manifold $M_{l+1}$ that is $(Y_k,T_N)$-equivalent to $M,$ and such that
	$$J\nsubseteq I_{p_{l+1}}(M_{l+1}).$$
	
	This completes the induction step; we have constructed an infinite sequence of $3$-manifolds that are all $(Y_k,T_n)$-equivalent to $M$ but pairwise do not embed in each other.
\end{proof}

\begin{rem}
	The classical obstructions to embeddings listed in Section \ref{sec:overview} (namely the ones coming from homology and Gromov norm) are inequalities; this means that if we can use them to obstruct the embedding of $M$ into $N$, we necessarily cannot use them to also obstruct the embedding of $N$ into $M$, and thus they cannot be used to prove anything similar to Corollary \ref{cor:pairwiseNotEmbedding}.
\end{rem}
\section{Lower bounds for non-embedding probabilities}
\label{sec:proba}
In this section, we will give a quantitative version of Theorem \ref{thm:main}, using the variation of the Dunfield-Thurston model of random $3$-manifolds that was described in the introduction.

We briefly recall the setting. Let $N$ be a compact oriented $3$-manifold, and let $N=C\cup_\phi H$ be a Heegaard splitting of $N$ with Heegaard surface $F$ of genus $g\geq 2.$ Fix $k,n\geq 1,$ and let $G=\Gamma_k I(F)\cap T_n.$ Fix also $S$ a generating set for $G$ (note that $S$ is not necessarily finite; in fact $G$ may not be finitely generated), and $\mu$ a probability measure on $S$ with full support.

We let $(f_d)_{d\geq 0}\in G$ be a sequence of random variables given by
$$f_d=\varphi_1\ldots \varphi_d$$
where the $\varphi_i$ are independent identically distributed random variables with law $\mu.$

We will set $d_p(F):=\dim RT_p(F)$ for any compact oriented surface $F$ and any prime number $p\geq 5.$
Finally, let $M$ be a $p$-good $3$-manifold, for some prime $p$ coprime to $n,$ and let $J\subset \Z[p^{-1},\zeta_p]$ be an ideal such that
\begin{enumerate}
	\item $\Z[p^{-1},\zeta_p]/J\simeq \mathbb{F}_q$
	\item The quantum representation $\rho_{p,q}$ of $\mathrm{Mod}(F)$ surjects onto $\mathrm{PSL}_{d_p(F)}(\mathbb{F}_q)$
	\item $I_p(M)\nsubseteq J.$
\end{enumerate}

 The following is a quantitative version of Theorem \ref{thm:main2} of the intro.
\begin{thm}
	\label{thm:proba} Let $$N'_d=C\cup_{\phi\circ f_d}H$$ be the random $3$-manifold described above. Then 
	$$\underset{d\rightarrow\infty}{\liminf} \ \mathbb{P}(N'_d \ \textrm{does not embed in } \ M)\geq \frac{q^{d_p(F)-d_p(\partial N)}-1}{q^{d_p(F)}-1}.$$
\end{thm}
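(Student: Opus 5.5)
Write $V=RT_p(F)\otimes_{\Z[p^{-1},\zeta_p]}\mathbb{F}_q$, a vector space of dimension $D:=d_p(F)$. Let $K\subset V$ be the kernel of $RT_p(C) \bmod J$; it has dimension at least $D-d_p(\partial N)$. Let $h\in V$ be the reduction of $RT_p(H)$, which is non-zero.

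As in the proof of Theorem \ref{thm:main}, we have $RT_p(N'_d)=RT_p(C)\circ\rho_p(\phi)\rho_p(f_d)(RT_p(H))$. So $N'_d$ fails to embed in $M$ as soon as the line spanned by $\rho_{p,q}(\phi f_d)h$ lies in $K$. In that case $RT_p(N'_d)\in J$, hence $I_p(N'_d)\subseteq J$. Since $I_p(M)\nsubseteq J$, this gives $I_p(M)\nsubseteq I_p(N'_d)$. Note that $\rho_{p,q}$ is only projective, but the condition that the line $[\rho_{p,q}(\phi f_d)h]\in\mathbb{P}(V)$ lies in $\mathbb{P}(K)$ is well defined.

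The strategy is to show that the random point $\rho_{p,q}(f_d)[h]$ becomes asymptotically uniformly distributed on $\mathbb{P}(V)$. We then compute the proportion of points lying in $\rho_{p,q}(\phi)^{-1}\mathbb{P}(K)$. That proportion is
\[
\frac{|\mathbb{P}(K)|}{|\mathbb{P}(V)|}\geq \frac{q^{D-d_p(\partial N)}-1}{q^{D}-1},
\]
which is exactly the bound claimed in Theorem \ref{thm:proba}.

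The key steps are as follows.

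(i) By Proposition \ref{prop:densityGammakI} with $H=\mathrm{Mod}(F)$, together with hypothesis (2), the image $\rho_{p,q}(G)$ is all of $\mathrm{PSL}_D(\mathbb{F}_q)$.

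(ii) Push $\mu$ forward to a measure $\bar\mu$ on the finite group $P=\mathrm{PSL}_D(\mathbb{F}_q)$. The support of $\bar\mu$ generates $P$. The image of $f_d$ is then a random walk on $P$.

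(iii) This walk might be periodic, so convergence to the uniform measure on $P$ can fail. To handle this, consider the subgroup $P_0$ generated by products $s^{-1}t$ with $s,t$ in the support of $\bar\mu$. It is normal in $P$ and $P/P_0$ is cyclic. Since $P$ is perfect (indeed simple), $P_0=P$. The standard theory of random walks on finite groups then gives convergence of the law of $\bar f_d$ to the uniform measure on $P$. Concretely, one shows the walk is aperiodic on the generated group, using that the quotient by $P_0$ would be an abelian quotient, hence trivial.

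(iv) $P$ acts transitively on $\mathbb{P}(V)$. Hence the pushforward of the uniform measure on $P$ under $g\mapsto g[h]$ is uniform on $\mathbb{P}(V)$. Composing with the fixed element $\rho_{p,q}(\phi)$ preserves uniformity.

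(v) Take the $\liminf$ and count points: $|\mathbb{P}(K)|=(q^{\dim K}-1)/(q-1)$ and $|\mathbb{P}(V)|=(q^{D}-1)/(q-1)$.

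The main obstacle is step (iii), the equidistribution of a possibly infinitely supported random walk projected to $P$. The convergence holds for any probability measure on a finite group whose support generates the group and is not contained in a coset of a proper normal subgroup. Here that condition is guaranteed by perfectness: a coset of a proper normal subgroup containing the support would yield a nontrivial cyclic quotient of $P$, which is impossible.

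A secondary check is $\dim K\geq D-d_p(\partial N)$. This follows from rank–nullity once we know $RT_p(C)$ maps into $RT_p(\partial N)$, of dimension $d_p(\partial N)$ after reduction mod $J$. If $\partial N$ is disconnected, $d_p(\partial N)$ should be read as the dimension of the tensor product.
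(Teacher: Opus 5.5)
Your proposal is correct and follows the paper's proof essentially step for step: surjectivity of $\rho_{p,q}$ on $\Gamma_kI(F)\cap T_n$ from Proposition~\ref{prop:densityGammakI}, equidistribution of the projected walk on $\mathrm{PSL}_{d_p(F)}(\mathbb{F}_q)$ (Lemma~\ref{lem:unifDistrib}), uniformity of the image line and the count $|\mathbb{P}(K)|/|\mathbb{P}(V)|$ (Lemma~\ref{lemma:probaKernel}), and the rank--nullity bound on $\ker RT_p(C) \bmod J$. One slip in step (iii): the subgroup generated by the elements $s^{-1}t$ need not be normal (for the generating support $\{(12),(123)\}$ of $S_3$ it is a non-normal subgroup of order $2$), so you should take its normal closure instead, or simply rely on the coset criterion in your final paragraph, which is correct and is exactly the content of the paper's appeal to simplicity for aperiodicity.
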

\begin{rem}\label{rk:lowerBound}
	When $p=5$ and $\partial N=T^2$ for example, by choosing a Heegaard splitting of $N$ of large genus, this lower bound can be made arbitrarily close to $\frac{1}{q^2},$ since $d_5(T^2)=2$ and $d_5(\Sigma_g)\underset{g\rightarrow \infty}{\longrightarrow}+\infty.$
\end{rem}

We break the proof of Theorem \ref{thm:proba} into two lemmas:

\begin{lem}
	\label{lem:unifDistrib} Let $k,n\geq 1.$ For $(f_d)_{d\geq 0}$ a random walk on $\Gamma_kI(F)\cap T_n,$ the distribution of $\rho_{p,q}(f_d)$ tends to the uniform distribution on $\mathrm{PSL}_d(\mathbb{F}_q).$
\end{lem}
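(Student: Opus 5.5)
We will reduce the lemma to the standard convergence theorem for random walks on finite groups. Write $D=d_p(F)$, so the target group is $P=\mathrm{PSL}_{D}(\mathbb{F}_q)$; the $d$ in the statement denotes this dimension and should not be confused with the length of the walk. Let $\nu=(\rho_{p,q})_*\mu$ be the pushforward of $\mu$ to $P$. Since $\rho_{p,q}$ is a homomorphism and the $\varphi_i$ are i.i.d., the law of $\rho_{p,q}(f_d)=\rho_{p,q}(\varphi_1)\cdots\rho_{p,q}(\varphi_d)$ is the $d$-fold convolution $\nu^{*d}$. This is a probability measure on a finite group, even though $S$ itself may be infinite. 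So we are looking at a random walk on $P$ driven by $\nu$, i.e.\ a Markov chain with doubly stochastic transition matrix $K(x,y)=\nu(x^{-1}y)$. By the classical theorem for finite Markov chains, it suffices to show that this chain is \emph{irreducible} and \emph{aperiodic}; the uniform measure is then its unique stationary distribution, and $\nu^{*d}$ converges to it.

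\textbf{Irreducibility.} The support of $\nu$ is $\rho_{p,q}(S)$. Since $S$ generates $G=\Gamma_kI(F)\cap T_n$, the set $\rho_{p,q}(S)$ generates $\rho_{p,q}(G)$. By Proposition \ref{prop:densityGammakI}, applied with $H=\mathrm{Mod}(F)$ (so that the index condition is trivially satisfied) and using that $p$ is coprime to $n$ and that $\rho_{p,q}$ surjects by assumption (2), we get $\rho_{p,q}(G)=P$. In a finite group the semigroup generated by a set coincides with the group it generates, since inverses are positive powers. Hence every element of $P$ is reached with positive probability at some step, and the chain is irreducible.

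\textbf{Aperiodicity.} This is the only point needing care, and we will use the simplicity of $P$. Suppose the chain had period $m>1$. The standard structure of periodic irreducible walks on groups then gives a normal subgroup $K\triangleleft P$ of index $m$, namely the set of elements reachable from the identity in a multiple of $m$ steps. Moreover, $\mathrm{supp}(\nu)$ lies in a single nontrivial coset $gK$, with $P/K$ cyclic of order $m$ generated by $gK$. Since $P$ is simple and $m>1$, we would need $K=\{1\}$, so $P$ would be cyclic. This is impossible, since $P$ is non-abelian because $D\geq 2$ for $g\geq 2$ and $p\geq 5$. Hence the period is $1$.

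Alternatively, one can phrase both steps through Fourier analysis. For every nontrivial irreducible representation $\pi$ of $P$, the operator $\hat\nu(\pi)=\sum_x\nu(x)\pi(x)$ has norm $\le 1$. It can have an eigenvalue of modulus $1$ only if all $\pi(x)$ with $x\in\mathrm{supp}(\nu)$ share a common eigenvector with a common eigenvalue. This would force $\mathrm{supp}(\nu)$ into a coset of a proper normal subgroup, which is again excluded by simplicity together with $\rho_{p,q}(G)=P$. Hence the spectral radius of $\hat\nu(\pi)$ is less than $1$, so $\hat\nu(\pi)^d\to 0$, and $\nu^{*d}$ converges to uniform. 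In either formulation the substantive input is $\rho_{p,q}(G)=P$ from Proposition \ref{prop:densityGammakI}; the aperiodicity argument is where simplicity and non-commutativity of $P$ are genuinely needed.
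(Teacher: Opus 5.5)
Your proposal is correct and follows the same route as the paper: push the walk forward to a random walk on the finite group $\mathrm{PSL}_{d_p(F)}(\mathbb{F}_q)$, get irreducibility from the surjectivity in Proposition~\ref{prop:densityGammakI}, get aperiodicity from simplicity, and conclude with the convergence theorem for finite Markov chains. You additionally spell out why simplicity gives aperiodicity (the cyclic coset structure of a periodic walk, or equivalently the Fourier argument); the paper asserts this step without detail, and your argument for it is sound.
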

\begin{proof}
	We note that $\rho_{p,q}(f_d)=\rho_{p,q}(\varphi_1)\ldots \rho_{p,q}(\varphi_d).$ The $\rho_{p,q}(\varphi_i)$ are independent identically distributed random variables in $\mathrm{PSL}_{d_p(F)}(\mathbb{F}_q),$ with law $\nu:=\left(\rho_{p,q}\right)_*(\mu).$ The law $\nu$ has support $\rho_{p,q}(S),$ which is a finite generating set of $\mathrm{PSL}_{d_p(F)}(\mathbb{F}_q),$ since $\rho_{p,q}$ restricted to $\Gamma_kI(F)\cap T_n$ is surjective by Proposition \ref{prop:densityGammakI}. Then $\rho_{p,q}(f_d)$ is an irreducible and aperiodic Markov chain on $\mathrm{PSL}_{d_p(F)}(\mathbb{F}_q);$ where irreducibility follows from $\rho_{p,q}(S)$ being a generating set and aperiodicity follows from $\mathrm{PSL}_{d_p(F)}(\mathbb{F}_q)$ being a simple group. By the theory of Markov chains (see for example \cite[Theorem 4.9]{DPW09}), we deduce that $\rho_{p,q}(f_d)$ converges to the uniform distribution on $\mathrm{PSL}_{d_p(F)}(\mathbb{F}_q).$
\end{proof}

\begin{lem}
	\label{lemma:probaKernel} Let $n\geq 2$ and let $V$ be a subspace of $(\mathbb{F}_q)^n$ of dimension $m.$ Let $X$ be a uniformly distributed random variable in $\mathrm{PSL}_{n}(\mathbb{F}_q).$ Then for any non-zero vector $v\in (\mathbb{F}_q)^n,$
	$$\mathbb{P}(Xv\in V)=\frac{q^m-1}{q^n-1}.$$ 
\end{lem}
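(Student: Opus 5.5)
The plan is to reduce the statement to the fact that $\mathrm{SL}_n(\mathbb{F}_q)$ acts transitively on nonzero vectors, and then simply count. First note that the event $\{Xv\in V\}$ is well defined for $X\in \mathrm{PSL}_n(\mathbb{F}_q)$. If $\tilde X\in \mathrm{SL}_n(\mathbb{F}_q)$ is any lift of $X$, the other lifts are $\lambda\tilde X$ with $\lambda$ a scalar ($\lambda^n=1$). Since $V$ is a linear subspace, $\tilde Xv\in V$ if and only if $\lambda \tilde X v\in V$.

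Next, pass to $\mathrm{SL}_n(\mathbb{F}_q)$. The projection $\mathrm{SL}_n(\mathbb{F}_q)\to \mathrm{PSL}_n(\mathbb{F}_q)$ is a surjective homomorphism, so all of its fibers are cosets of the center and have the same cardinality. Hence if $\tilde X$ is uniformly distributed in $\mathrm{SL}_n(\mathbb{F}_q)$, its image is uniformly distributed in $\mathrm{PSL}_n(\mathbb{F}_q)$. It therefore suffices to compute $\mathbb{P}(\tilde X v\in V)$ for $\tilde X$ uniform in $\mathrm{SL}_n(\mathbb{F}_q)$.

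The key step is to show that $\tilde X v$ is uniformly distributed on $\mathbb{F}_q^n\setminus\{0\}$. Transitivity holds for $n\geq 2$: given nonzero $v,w$, extend each to a basis and form the matrix $A$ sending the first basis to the second. Rescaling one of the remaining basis vectors of the target basis (possible since $n\geq 2$) gives $\det A=1$. By orbit--stabilizer, for each nonzero $w$ the set $\{A\in \mathrm{SL}_n(\mathbb{F}_q): Av=w\}$ is a coset of $\mathrm{Stab}(v)$, so all these sets have the same size $|\mathrm{SL}_n(\mathbb{F}_q)|/(q^n-1)$. Thus $\tilde X v$ is uniform on the $q^n-1$ nonzero vectors. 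Since $\tilde Xv\neq 0$ always, the event $\tilde Xv\in V$ is the event $\tilde Xv\in V\setminus\{0\}$, a set of $q^m-1$ vectors. This gives probability $\frac{q^m-1}{q^n-1}$.

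The only mildly delicate points are the two just used: that the event is well defined on $\mathrm{PSL}$, and that transitivity holds in $\mathrm{SL}_n$ rather than merely in $\mathrm{GL}_n$. Both are settled by the scalar-invariance and determinant-rescaling arguments above, so I expect no real obstacle.
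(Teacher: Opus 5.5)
Your proof is correct and follows the paper's route: the paper states that $Xv$ is uniformly distributed on $\mathbb{P}(\mathbb{F}_q^n)$ and divides the $\frac{q^m-1}{q-1}$ points of $\mathbb{P}(V)$ by the $\frac{q^n-1}{q-1}$ points of $\mathbb{P}(\mathbb{F}_q^n)$, which is your count of nonzero vectors in $V$ after lifting to $\mathrm{SL}_n(\mathbb{F}_q)$. Your version also proves the steps the paper leaves implicit: that the event is well defined on $\mathrm{PSL}_n(\mathbb{F}_q)$, and that $\mathrm{SL}_n(\mathbb{F}_q)$ acts transitively with equal-size fibers (via orbit--stabilizer).
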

\begin{proof}
	Because $X\in \mathrm{PSL}_n(\mathbb{F}_q)$, $Xv$ is a uniformly distributed random point in $\mathbb{P}(\mathbb{F}_q^n)$. Since there are $\frac{q^m-1}{q-1}$ points in $\mathbb{P}(V)$ and $\frac{q^n-1}{q-1}$ points in $\mathbb{P}(\mathbb{F}_q^n)$, the formula follows.
\end{proof}
\begin{proof}[Proof of Theorem \ref{thm:proba}]
	By Lemma \ref{lem:unifDistrib}, $\rho_{p,q}(\phi \circ f_d)$ tends to the uniform distribution in $\mathrm{PSL}_{d_p(F)}(\mathbb{F}_q).$ 
	
	We apply Lemma \ref{lemma:probaKernel} with $n=d_p(F),$ $v=RT_p(H)$ and $V=\ker RT_p(C) \pmod{J}.$ Note that since $RT_p(C):RT_p(F)\longrightarrow RT_p(\partial N),$ its kernel is at least of dimension $d_p(F)-d_p(\partial N).$
	
	Since if $\rho_p(\phi\circ f_d)(RT_p(H)) \in \ker RT_p(C) \pmod{J},$ then $I_p(N'_d)\in J,$ and since $I_p(M)\nsubseteq J,$ we conclude that 
	$$\underset{d\rightarrow\infty}{\liminf} \ \mathbb{P}(N'_d \ \textrm{does not embed in } \ M)\geq \frac{q^{d_p(F)-d_p(\partial N)}-1}{q^{d_p(F)}-1},$$
	as claimed.
\end{proof}

\begin{rem}
	A random $3$-manifold is hyperbolic, that is to say, the probability of obtaining a hyperbolic $3$-manifold from a random walk of length $d$ in the mapping class group has limit $1$ as $d\ra\infty$. This appeared in \cite{Mah} for the closed case; the case with boundary follows for example from the reasoning at the start of Proposition \ref{prop:hyperbolic} plus the work of \cite{Mah}. Similarly, a random $3$-manifold contains no essential, non-boundary parallel surfaces of genus less than a given $g$ by \cite{Sch} and \cite{Mah}. The volume of a hyperbolic closed $3$-manifold obtained from a random walk of length $d$ grows linearly in $d$ almost surely, by \cite{Via}; a modification of the techniques in that paper should lead to an analogous result for manifolds with boundary.
\end{rem}
\bibliographystyle{alpha}
\bibliography{biblio}
\end{document}